\newif\ifHYPER\global\HYPERtrue
\newif\ifJOURNAL\global\JOURNALfalse
\definecolor{ks-green}{rgb}{0.0,0.7,0.0}
\definecolor{ks-red}{rgb}{0.7,0.0,0.0}
\definecolor{ks-blue}{rgb}{0.0,0.0,0.7}
\numberwithin{equation}{section}
\theoremstyle{plain}
\newtheorem{theorem}[equation]{Theorem}
\newtheorem{lemma}[equation]{Lemma}
\newtheorem{corollary}[equation]{Corollary}
\newtheorem{proposition}[equation]{Proposition}
\theoremstyle{definition}
\newtheorem{definition}[equation]{Definition}
\newtheorem{Example}[equation]{Example}
\newtheorem{Remark}[equation]{Remark}
\theoremstyle{remark}
\newenvironment{remark}{\emph{Remark.}}{}
\newenvironment{notation}{\emph{Notation.}}{}
\def\trp{^{\!\top}}
\def\inv{^{-1}}
\def\quinv{^+}
\def\numN{\mathbb{N}}
\def\revddots{\mathinner{\mkern1mu\raise1pt\vbox{\kern7pt\hbox{.}}\mkern2mu
  \raise4pt\hbox{.}\mkern2mu\raise7pt\hbox{.}\mkern1mu}}
\newcommand{\ncRATS}[2]{#1^{\text{rat}}\langle\!\langle #2\rangle\!\rangle}
\newcommand{\ncPOWS}[2]{#1\langle\!\langle #2\rangle\!\rangle}
\newcommand{\freeALG}[2]{#1\langle #2\rangle}
\newcommand{\freeFLD}[2]{#1(\!\langle #2\rangle\!)}
\newcommand{\perm}{\Sigma}
\DeclareMathOperator{\rank}{rank}
\DeclareMathOperator{\size}{size}
\DeclareMathOperator{\linsp}{span}
\newcommand{\field}[1]{\mathbb{#1}}
\newcommand{\als}[1]{\mathcal{#1}}
\newcommand{\qdet}[1]{\vert #1 \vert}
\begin{document}
\title{Linearizing the Word Problem\\ in (some) Free Fields}
\author{Konrad Schrempf%
  \footnote{%
    Contact: math@versibilitas.at,
    Universität Wien, Fakultät für Mathematik,
    Oskar-Morgenstern-Platz~1, 1090 Wien, Austria.
    Supported by the Austrian FWF Project P25510-N26
``Spectra on Lamplighter groups and Free Probability''}
  }

\maketitle

\begin{abstract}
We describe a solution of the word problem
in free fields (coming from non-commutative polynomials
over a commutative field) using elementary linear algebra,
provided that the elements are given by
minimal linear representations.
It relies on the normal form of Cohn and Reutenauer
and can be used more generally to (positively) test rational identities.
Moreover we provide a construction of minimal linear representations
for the inverse of non-zero elements.
\end{abstract}

\medskip
{\bfseries Keywords:} word problem, minimal linear representation,
linearization, realization, admissible linear system, rational series

{\bfseries AMS Classification:} 16K40, 03B25, 16S10, 15A22

\section*{Introduction}

Free (skew) fields arise as universal objects when it comes to embed the ring of
non-commutative polynomials, that is, polynomials in
(a finite number of) non-commuting variables, into a
skew field
\cite[Chapter~7]{Cohn1985a}
. The notion of ``free fields'' goes back to Amitsur
\cite{Amitsur1966a}
. A brief introduction can be found in
\cite[Section~9.3]{Cohn2003b}
, for details we refer to
\cite[Section~6.4]{Cohn1995a}
.
In the present paper we restrict the setting
to \emph{commutative} ground fields, as a special case.
See also
\cite{Roberts1984a}
.
In
\cite{Cohn1994a}
, Cohn and Reutenauer introduced a \emph{normal form} for
elements in free fields in order to extend results from
the theory of formal languages.
In particular they characterize minimality
of linear representations in terms of linear independence of the entries of
a column and a row vector, generalizing the concept of
``controllability and observability matrix'' (Section~\ref{sec:wp.reg}).

It is difficult to solve the word problem,
that is, given linear representations (LRs for short) 
of two elements $f$ and $g$,
to decide whether $f=g$.
In \cite{Cohn1999a}
\ the authors describe an answer of this question
(for free fields coming from non-commutative polynomials
over a \emph{commutative} field). In practice however,
this technique (using Gröbner bases) can be impractical
even for representations of small dimensions.

Fortunately, it turns out that the word problem is
equivalent to the solvability of a \emph{linear} system of equations
if \emph{both} elements are given by \emph{minimal} linear representations.
Constructions of the latter are known for \emph{regular} elements
(non-commutative rational series),
but in general non-linear techniques are necessary.
This is considered in future work.
Here we present a simple construction of minimal LRs
for the inverses of arbitrary non-zero elements given by
\emph{minimal} LRs.
In particular this applies to the inverses
of non-zero polynomials
with vanishing constant coefficient
(which are not regular anymore).
This is of interest especially for those polynomials
which are \emph{identities} 
\cite{Amitsur1950a}
, for example $xy-yx$ (which vanishes identically on
\emph{commutative} rings).

In any case, \emph{positive} testing of rational identities
becomes easy. Furthermore, the implementation in
computer (algebra) software needs only a basic data structure
for matrices (linear matrix pencil) and an \emph{exact}
solver for linear systems.

\medskip
Section~\ref{sec:wp.rep} introduces the required notation
concerning linear representations and admissible linear
systems in free fields. Rational operations on representation
level are formulated and the related concepts of linearization
and realization are briefly discussed.
Section~\ref{sec:wp.wp} describes the word problem.
Theorem~\ref{thr:wp.wp} shows that the (in general non-linear)
problem of finding appropriate
transformation matrices can be reduced to a \emph{linear system of equations}
if the given LRs are \emph{minimal}.
Examples can be constructed for regular elements (rational series)
as special cases (of elements in the free field), which
are summarized in Section~\ref{sec:wp.reg}.
Here algorithms for obtaining minimal LRs are already known.
Section~\ref{sec:wp.min} provides a first step in
the construction of minimal LRs (with linear techniques),
namely for the inverses of non-zero elements
given itself by a \emph{minimal} linear representations.
This is formulated in Theorem~\ref{thr:wp.mininv}.

\medskip
The main result is Theorem~\ref{thr:wp.wp},
the ``linear'' word problem. Although it is rather elementary,
it opens the possibility to work \emph{directly} on linear
representations (instead of the spaces they ``span'').
Or, using Bergman's words
\cite{Bergman1978a}
: ``The main results in this paper are trivial. But what is trivial
when described in the abstract can be far from clear in the
context of a complicated situation where it is needed.''

\section{Representing Elements}\label{sec:wp.rep}

Although there are several ways for representing elements
in (a subset of) the free field (linear representation
\cite{Cohn1999a}
, linearization
\cite{Cohn1985a}
,
realization
\cite{Helton2006a}
, proper linear system
\cite{Salomaa1978a}
, etc.)
the concept of 
a \emph{linear representation}
seems to be the most convenient.
It has the advantage (among others) that
in the special case of regular elements,
the general definition of the rank coincides with
the Hankel rank
\cite{Fliess1974a}
,
\cite[Section~II.3]{Salomaa1978a}
.\index{Hankel rank}

Closely related to LRs are
\emph{admissible linear systems} (ALS for short)
\cite{Cohn1985a}
, which could be seen as a special case.
Both notations will be used synonymously.
Depending on the context an ALS will be written
as a triple, for example $\als{A} = (u,A,v)$
or as linear system $As = v$,
sometimes as $u = tA$.
Like the rational operations defined on linear
representation level 
\cite{Cohn1999a}
, similar constructions can be done easily
on ALS level. Thus, starting from systems for monomials
(Proposition~\ref{pro:min.mon})
only, a representation for \emph{each} element in the free
field can be constructed recursively.

\begin{notation}
Zero entries in matrices are usually replaced by (lower) dots
to stress the structure of the non-zero entries
unless they result from transformations where there
were possibly non-zero entries before.
We denote by $I_n$ the identity matrix and
$\perm_n$ the permutation matrix that reverses the order
of rows/columns of size $n$.
If the size is clear from the context, $I$ and $\perm$ are used
respectively.
\end{notation}

\medskip
Let $\field{K}$ be a \emph{commutative} field and
$X = \{ x_1, x_2, \ldots, x_d\}$ be a \emph{finite} alphabet.
$\freeALG{\field{K}}{X}$ denotes the \emph{free associative
algebra}\index{free associative algebra}
(or ``algebra of non-commutative polynomials'')
and $\freeFLD{\field{K}}{X}$ denotes the \emph{universal field of
fractions}\index{universal field of fractions}
(or ``free field'') of $\freeALG{\field{K}}{X}$
\cite{Cohn1995a}
,
\cite{Cohn1999a}
. In the examples the alphabet is usually $X=\{x,y,z\}$.

\begin{definition}[Inner Rank, Full Matrix, Hollow Matrix
\cite{Cohn1985a}
, \cite{Cohn1999a}
]
\index{inner rank}\index{full matrix}\index{hollow matrix}%
Given a matrix $A \in \freeALG{\field{K}}{X}^{n \times n}$, the \emph{inner rank}
of $A$ is the smallest number $m\in \numN$
such that there exists a factorization
$A = T U$ with $T \in \freeALG{\field{K}}{X}^{n \times m}$ and
$U \in \freeALG{\field{K}}{X}^{m \times n}$.
The matrix $A$ is called \emph{full} if $m = n$,
\emph{non-full} otherwise. It is called \emph{hollow} if
it contains a zero submatrix of
size $k \times l$ with $k + l > n$.
\end{definition}

\begin{definition}[Associated and Stably Associated Matrices
\cite{Cohn1995a}
]\label{def:wp.ass}
\index{associated matrix}\index{stable associated matrix}%
Two matrices $A$ and $B$ over $\freeALG{\field{K}}{X}$ (of the same size)
are called \emph{associated} over a subring $R\subseteq \freeALG{\field{K}}{X}$ 
if there exist
invertible matrices $P,Q$ over $R$ such that
$A = P B Q$. $A$ and $B$ (not necessarily of the same size)
are called \emph{stably associated}
if $A\oplus I_p$ and $B\oplus I_q$ are associated for some unit
matrices $I_p$ and $I_q$.
Here by $C \oplus D$ we denote the diagonal sum
$\bigl[\begin{smallmatrix} C & . \\ . & D \end{smallmatrix}\bigr]$.
\end{definition}

In general it is hard to decide whether a matrix is full or not.
For a linear matrix, that is, a matrix of the form
$A = A_0 \otimes 1 + A_1 \otimes x_1 + \ldots + A_d \otimes x_d$
with $A_\ell$ over $\field{K}$, the following criterion is known,
which is used in (the proof of) Theorem~\ref{thr:wp.cohn99.41}.
If a matrix over $\freeALG{\field{K}}{X}$ is not linear,
then Higman's trick
\cite[Section~5.8]{Cohn1985a}
\ can be used to linearize it by enlargement.\index{linearization by enlargement}
The inner rank is also discussed in
\cite{Fortin2004a}
.

\begin{lemma}[%
\protect{\cite[Corollary~6.3.6]{Cohn1995a}
}]\label{lem:cohn95.636}
A linear square matrix over $\freeALG{\field{K}}{X}$
which is not full is associated over $\field{K}$ to a linear
hollow matrix.
\end{lemma}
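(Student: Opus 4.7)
The plan is to produce, by $\field{K}$-invertible row and column operations, a zero block in $A$ large enough to witness hollowness, starting from a rank-revealing factorization over $R = \ncPOLY{\field{K}}{X}$. Non-fullness yields $A = TU$ with $T \in R^{n \times m}$ and $U \in R^{m \times n}$, where $m$ is the inner rank and $m < n$. This factorization is invariant under the action $(T, U) \mapsto (TN, N^{-1}U)$ for $N \in GL_m(R)$, and the aim is to use this freedom together with the linearity of $A$ to bring the factorization into a form where one factor has entries in $\field{K}$.

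To exploit $\deg A \le 1$, split $T = T_0 + T_{>0}$ and $U = U_0 + U_{>0}$ into constant and positive-degree components. Then $A = T_0 U_0 + T_0 U_{>0} + T_{>0} U_0 + T_{>0} U_{>0}$, and since only the last summand can contribute in total degree $\ge 2$, it must vanish in each homogeneous degree $\ge 2$. Because $R$ is a semifir admitting the weak algorithm, this cancellation has strong structural consequences: after a suitable $GL_m(R)$-normalization (and possibly a permutation of the middle index), one factor — say $U$ — can be arranged to lie entirely in $\field{K}^{m \times n}$.

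Once $U \in \field{K}^{m \times n}$ with $m < n$, its $\field{K}$-rank is at most $m$, so there is $Q \in GL_n(\field{K})$ with $UQ = [U' \;\; 0]$, whose last $n - m$ columns vanish. Then $AQ = T(UQ) = [TU' \;\; 0]$ has a zero block of size $n \times (n - m)$. Since $n + (n - m) > n$, the matrix $AQ$ is hollow; it remains linear because $A$ is linear and $Q \in \field{K}^{n \times n}$. Taking $P = I_n$ shows that $A$ is associated over $\field{K}$ to the linear hollow matrix $AQ$, as desired.

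The main obstacle is the reduction step in the second paragraph: converting the high-degree vanishing of $T_{>0} U_{>0}$ into an actual reduction of one factor to scalar entries. This is the technical heart of the argument, leaning on the structural properties of the free algebra as a semifir with weak algorithm, and is precisely where Cohn's proof concentrates its effort.
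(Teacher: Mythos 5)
The step on which everything hinges---that the rank factorization $A=TU$ can be normalized so that one factor lies \emph{entirely} in $\field{K}^{m\times n}$---is false, and the lemma cannot be proved along that line. If $U\in\field{K}^{m\times n}$ with $m<n$, then $U$ has a nonzero right kernel vector $q\in\field{K}^{n}$, so $Aq=TUq=0$ and the columns of $A$ are $\field{K}$-linearly dependent; symmetrically, a scalar $T$ would force the rows to be $\field{K}$-linearly dependent. A non-full linear matrix need have neither property: the matrix
$A=\bigl[\begin{smallmatrix} . & . & x\\ . & . & y\\ x & y & z\end{smallmatrix}\bigr]$
is hollow, hence non-full of inner rank $2$, yet its rows and its columns are each $\field{K}$-linearly independent, so no factorization with a scalar factor exists. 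For the same reason your final step, which takes $P=I_n$ and produces $n-m$ identically zero columns of $AQ$, asserts something strictly stronger than hollowness and is unattainable in general; the zero block one can actually reach sits off the diagonal and requires row \emph{and} column operations.

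What the cancellation of the degree-$2$ part actually yields is weaker than you claim, and this is precisely the content of Cohn's argument. After the weak algorithm is used to arrange that both $T$ and $U$ are linear, the identity $T_1U_1=0$ for the homogeneous degree-$1$ parts gives (by Sylvester's inequality over the free field) $\rank T_1+\rank U_1\le m$; hence for a suitable $N\in GL_m(\field{K})$ the factors split into complementary blocks $TN=[\,T^{(1)}\ T^{(2)}\,]$ and $N\inv U=\bigl[\begin{smallmatrix}U^{(1)}\\ U^{(2)}\end{smallmatrix}\bigr]$ in which only the width-$(m-r)$ block $T^{(2)}$ and the height-$r$ block $U^{(1)}$ are scalar---not a whole factor. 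One then chooses $Q\in GL_n(\field{K})$ annihilating the last $n-r$ columns of $U^{(1)}$ and $P\in GL_n(\field{K})$ making the last $n-(m-r)$ rows of $PT^{(2)}$ vanish, so that $PAQ=PT^{(1)}U^{(1)}Q+PT^{(2)}U^{(2)}Q$ acquires a zero block of size $(n-m+r)\times(n-r)$, and $(n-m+r)+(n-r)=2n-m>n$ gives hollowness. Your second paragraph correctly locates where the technical work lies, but the conclusion you draw from it overshoots, and the concluding construction collapses without it.
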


\begin{definition}[Linear Representations
\cite{Cohn1994a}
]\label{def:wp.rep}
\index{linear representation}%
Let $f \in \freeFLD{\field{K}}{X}$.
A \emph{linear representation} of $f$ is a triple $(u,A,v)$ with
$u \in \field{K}^{1 \times n}$, $A = A_0 \otimes 1 + A_1 \otimes x_1 + \ldots
+ A_d \otimes x_d$, $A_\ell \in \field{K}^{n\times n}$ and
$v \in \field{K}^{n\times 1}$ such that $A$ is full,
that is,
$A$ is invertible over the free field $\freeFLD{\field{K}}{X}$,
and $f = u A\inv v$.
The \emph{dimension} of the representation is $\dim \, (u,A,v) = n$.
It is called \emph{minimal} if $A$ has the smallest possible dimension
among all linear representations of $f$.
The ``empty'' representation $\pi = (,,)$ is the minimal representation
for $0 \in \freeFLD{\field{K}}{X}$ with $\dim \pi = 0$.
\end{definition}

\begin{remark}
In Definition~\ref{def:wp.lin} it can be seen that $f=u A\inv v$ is
(up to sign) the \emph{Schur complement} of
the linearization
$\bigl[\begin{smallmatrix} 0 & u \\ v & A \end{smallmatrix}\bigr]$
with respect to the upper left $1 \times 1$ block.
\end{remark}

\begin{definition}[\cite{Cohn1999a}
]
\index{equivalent linear representations}%
Two linear representations are called \emph{equivalent} if
they represent the same element.
\end{definition}

\begin{definition}[Rank
\cite{Cohn1999a}
]
\index{rank}%
Let $f \in \freeFLD{\field{K}}{X}$ and $\pi$ be a \emph{minimal}
representation of $f$.
Then the \emph{rank} of $f$ is
defined as $\rank f = \dim \pi$.
\end{definition}

\begin{remark}
The connection to the related concepts of \emph{inversion height}
and \emph{depth} can be found in 
\cite{Reutenauer1996a}
, namely \emph{inversion height} $\le$ \emph{depth} $\le$ \emph{rank}.
Additional discussion about the depth appears in
\cite[Section 7.7]{Cohn2006a}
.
\end{remark}

\begin{definition}\label{def:wp.reg}
Let $M = M_1 \otimes x_1 + \ldots + M_d \otimes x_d$.
An element in $\freeFLD{\field{K}}{X}$ is called \emph{regular},
if it has a linear representation $(u,A,v)$ with $A = I - M$,
that is, $A_0 = I$ in Definition~\ref{def:wp.rep},
or equivalently, if $A_0$ is regular (invertible).
\end{definition}

\begin{definition}[Left and Right Families
\cite{Cohn1994a}
]\label{def:cohn94.family}
\index{left family}\index{right family}%
Let $\pi=(u,A,v)$ be a linear representation of $f \in \freeFLD{\field{K}}{X}$
of dimension $n$.
The families $( s_1, s_2, \ldots, s_n )\subseteq \freeFLD{\field{K}}{X}$
with $s_i = (A\inv v)_i$
and $( t_1, t_2, \ldots, t_n )\subseteq \freeFLD{\field{K}}{X}$
with $t_j = (u A\inv)_j$
are called \emph{left family} and \emph{right family} respectively.
$L(\pi) = \linsp \{ s_1, s_2, \ldots, s_n \}$ and
$R(\pi) = \linsp \{ t_1, t_2, \ldots, t_n \}$
denote their linear spans.
\end{definition}

\begin{remark}
The left family $(A\inv v)_i$ (respectively the right family $(u A\inv)_j$)
and the solution vector $s$ of $As = v$ (respectively $t$ of $u = tA$)
will be used synonymously.
\end{remark}

\begin{proposition}[%
\cite{Cohn1994a}
, Proposition 4.7]
A representation $\pi=(u,A,v)$ of an element $f \in \freeFLD{\field{K}}{X}$
is minimal if and only if both, the left family
and the right family, are $\field{K}$-linearly independent.
\label{pro:cohn94.47}
\end{proposition}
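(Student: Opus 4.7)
The plan is to prove the two implications by contraposition, using Lemma~\ref{lem:cohn95.636} (a non-full linear matrix is associated over $\field{K}$ to a hollow one) as the main non-trivial tool.

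For $(\Rightarrow)$, I would show that a $\field{K}$-linear dependence in the left family lets me reduce the dimension. Pick $0 \neq c \in \field{K}^n$ with $c\trp A\inv v = 0$ and extend $c\trp$ to an invertible $Q\inv \in \field{K}^{n \times n}$ having $c\trp$ as its last row. The equivalent ALS $(uQ,\,PAQ,\,Pv)$ (with $P$ still free to choose) has left family $Q\inv s$ whose last entry vanishes; partitioning $PAQ = \bigl[\begin{smallmatrix} B & b \\ r & d\end{smallmatrix}\bigr]$, $Pv = \bigl(\begin{smallmatrix} v'' \\ w\end{smallmatrix}\bigr)$, and $uQ = (\tilde{u}_1, \tilde{u}_2)$, the upper-block equation $B s'' = v''$ together with $f = \tilde{u}_1 s''$ yields a representation $(\tilde{u}_1, B, v'')$ of dimension $n-1$, provided $B$ is full. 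If no choice of $P$ achieves fullness of $B$, Lemma~\ref{lem:cohn95.636} associates $B$ over $\field{K}$ with a linear hollow matrix whose zero block permits a further drop; iterating, I obtain a strictly smaller linear representation of $f$, contradicting minimality. The right-family case is handled dually by applying the same argument to the ``transposed'' representation $(v\trp, A\trp, u\trp)$, which represents the same scalar.

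For $(\Leftarrow)$, I would argue by contradiction. Assume both families of $\pi$ are $\field{K}$-linearly independent and that a representation $\pi' = (u',A',v')$ of $f$ of dimension $n' < n$ exists. The idea is to form the ``difference'' linearization
\[
\hat{M} = \begin{bmatrix} 0 & u & -u' \\ v & A & 0 \\ v' & 0 & A' \end{bmatrix}
\]
of size $n+n'+1$. Its Schur complement at the top-left $1\times 1$ block equals $-(f - f) = 0$, so $\hat{M}$ fails to be invertible over $\freeFLD{\field{K}}{X}$ and in particular is not full. Lemma~\ref{lem:cohn95.636} supplies invertible $S, T$ over $\field{K}$ such that $S\hat{M}T$ is hollow, with a zero block of size $k \times l$ satisfying $k + l > n + n' + 1$. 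Partitioning $S$ and $T$ compatibly with the $(1, n, n')$ row/column decomposition of $\hat{M}$ and using that $A$, $A'$ are invertible over the free field, the hollow-block condition translates into $\field{K}$-linear combinations of the rows of $[\,v \mid A \mid 0\,]$ (which feed into the left family $s = A\inv v$) that must be cancelled by combinations coming from $[\,v' \mid 0 \mid A'\,]$. Since $\pi'$ contributes only $n'$ rows and $n'$ columns, a counting argument based on $k + l > n + n' + 1$ forces a nontrivial $\field{K}$-linear relation among the entries of either the left or the right family of $\pi$, contradicting the hypothesis.

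The main obstacle will be the second direction: turning the abstract hollow-block inequality $k + l > n + n' + 1$ into an explicit linear dependence in $L(\pi)$ or $R(\pi)$ requires careful bookkeeping of which rows and columns of $S\hat{M}T$ originate from the $\pi$-part versus the $\pi'$-part of $\hat{M}$. By contrast, the rank-one-reduction loop of the first direction, the Schur-complement observation, and the appeal to Lemma~\ref{lem:cohn95.636} are all standard linear-algebraic manipulations once the matrix $\hat{M}$ has been chosen.
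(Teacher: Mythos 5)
The paper does not prove this proposition --- it is imported from \cite{Cohn1994a} without proof --- so your argument has to stand on its own, and as written it does not. In the forward direction the skeleton is right, but the fallback step is broken: after arranging $s'_n=0$ you need the upper-left $(n-1)\times(n-1)$ block $B$ of $PAQ$ to be full, and your contingency ``if no $P$ makes $B$ full, apply Lemma~\ref{lem:cohn95.636} and drop further'' does not parse, because when $B$ is not full the triple $(\tilde{u}_1,B,v'')$ is not a linear representation and there is nothing to drop from. What you should prove instead is that the fallback is never needed: deleting the last column of $PAQ$ leaves an $n\times(n-1)$ matrix of inner rank $n-1$ (a smaller factorization of it would induce one of the full matrix $A$), hence of rank $n-1$ over $\freeFLD{\field{K}}{X}$, so some $n-1$ of its rows already form a full square block and a row permutation $P$ suffices. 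Separately, $(v\trp,A\trp,u\trp)$ does \emph{not} represent $f$ over a noncommutative ring, since $(MN)\trp\neq N\trp M\trp$ entrywise; either invoke the word-reversal anti-automorphism of the free field (under which your triple represents $f^*$, not $f$) or, more simply, run the row version of the same reduction directly on $tA=u$.

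The substantive gap is in the converse. Everything up to ``Lemma~\ref{lem:cohn95.636} yields a $k\times l$ zero block in $S\hat{M}T$ with $k+l>n+n'+1$'' is routine; the step you explicitly defer --- turning that inequality into a concrete $\field{K}$-linear relation in $L(\pi)$ or $R(\pi)$ --- is the entire content of the theorem (it is in essence the uniqueness part of the Cohn--Reutenauer normal form, cf.\ Theorem~\ref{thr:cohn99.14}) and no argument is given for it. You would have to track how the $k$ rows and $l$ columns distribute over the $(1,n,n')$ block structure, use invertibility of $A$ and $A'$ over the free field to convert the zero block into relations among $uA\inv$, $A\inv v$ and their primed counterparts, and only then does $n'<n$ force the dependence onto the $\pi$ side; none of this bookkeeping is done, and it is where the proof actually lives. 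A shorter route, given that the paper already quotes Theorem~\ref{thr:cohn99.14}: take $\pi'$ minimal of dimension $n'<n$; the block decomposition of $\pi$ it provides must have a nonempty first or third block, whose diagonal block is invertible over the free field, forcing a zero component in the left or right family of $\pi$ and hence a $\field{K}$-linear dependence.
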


\begin{definition}[Admissible Linear Systems
\cite{Cohn1972a}
]\label{def:wp.als}
\index{admissible linear system}\index{ALS|see{admissible linear system}}%
A linear representation $\als{A} = (u,A,v)$ of $f \in \freeFLD{\field{K}}{X}$
is called \emph{admissible linear system} (for $f$),
denoted by $A s = v$,
if $u=e_1=[1,0,\ldots,0]$. The element $f$ is then the first component
of the (unique) solution vector $s$.
\end{definition}

\begin{remark}
In 
\cite{Cohn1985a}
, Cohn defines admissible linear systems with
$v = v_0 \otimes 1 + v_1 \otimes x_1 + \ldots + v_d \otimes x_d$
with $v_i \in \field{K}^{n \times 1}$,
and $u = [0,\ldots,0,1]$. Writing $B = [-v,A]$ as block
of size $n \times (n+1)$ the first $n$ columns of $B$ serve
as numerator and the last $n$ columns of $B$ as denominator.
However, in this setting, for regular elements, the dimension of such a minimal
system could differ from the Hankel rank
\cite{Fliess1974a}
, \cite[Section~II.3]{Salomaa1978a}
.\index{Hankel rank}
\end{remark}

\begin{definition}[Admissible Transformations]
\label{def:trn.adm}
\index{admissible transformation}%
Given a linear representation $\als{A} = (u,A,v)$
of dimension $n$ of $f \in \freeFLD{\field{K}}{X}$
and invertible matrices $P,Q \in \field{K}^{n\times n}$,
the transformed $P\als{A}Q = (uQ, PAQ, Pv)$ is
again a linear representation (of $f$).
If $\als{A}$ is an ALS,
the transformation $(P,Q)$ is called
\emph{admissible} if the first row of $Q$ is $e_1 = [1,0,\ldots,0]$.
\end{definition}

\begin{Remark}[Elementary Transformations]\label{rem:trn.ele}
In practice, transformations can be done by elementary row- and
column operations (with respect to the system matrix $A$).
If we add $\alpha$-times row~$i$ to row~$j\neq i$ in $A$, we
also have to do this in $v$. If we add $\beta$-times column~$i$ to
column~$j\neq i$ we have to \emph{subtract} $\beta$-times row~$j$
from row~$i$ in $s$. Since it is not allowed to change the
first entry of $s$,
column~1 cannot be used to eliminate entries in other columns!
As an example consider the ALS
\begin{displaymath}
\begin{bmatrix}
1 & x-1 \\
. & 1
\end{bmatrix}
s =
\begin{bmatrix}
. \\ 1
\end{bmatrix},\quad
s =
\begin{bmatrix}
1-x \\
1
\end{bmatrix}
\end{displaymath}
for the element $1-x \in \freeFLD{\field{K}}{X}$. Adding column~1
to column~2, that is,
$Q = \bigl[\begin{smallmatrix} 1 & 1 \\ . & 1 \end{smallmatrix}\bigr]$
(and $P = I$), yields the ALS
\begin{displaymath}
\begin{bmatrix}
1 & x \\
. & 1
\end{bmatrix}
s =
\begin{bmatrix}
. \\ 1
\end{bmatrix},\quad
s =
\begin{bmatrix}
-x \\
1
\end{bmatrix}
\end{displaymath}
for the element $-x \neq 1-x$.
\end{Remark}

\begin{proposition}[Rational Operations
\protect{\cite[Section~1]{Cohn1999a}
}]\label{pro:wp.ratop}
Let $f,g,h \in \freeFLD{\field{K}}{X}$ be given by the
admissible linear systems $\als{A}_f = (u_f, A_f, v_f)$,
$\als{A}_g = (u_g, A_g, v_g)$ and
$\als{A}_h = (u_h, A_h, v_h)$
respectively, with $h \ne 0$
and let $\mu \in \field{K}$.
Then admissible linear systems for the rational operations
can be obtained as follows:

\smallskip\noindent
The scalar multiplication
$\mu f$ is given by
\begin{displaymath}
\mu \als{A}_f =
\bigl( u_f, A_f, \mu v_f \bigr).
\end{displaymath}
The sum $f + g$ is given by
\begin{displaymath}
\als{A}_f + \als{A}_g =
\left(
\begin{bmatrix}
u_f & . 
\end{bmatrix},
\begin{bmatrix}
A_f & -A_f u_f\trp u_g \\
. & A_g
\end{bmatrix}, 
\begin{bmatrix} v_f \\ v_g \end{bmatrix}
\right).
\end{displaymath}
The product $fg$ is given by
\begin{displaymath}
\als{A}_f \cdot \als{A}_g =
\left(
\begin{bmatrix}
u_f & . 
\end{bmatrix},
\begin{bmatrix}
A_f & -v_f u_g \\
. & A_g
\end{bmatrix},
\begin{bmatrix}
. \\ v_g
\end{bmatrix}
\right).
\end{displaymath}
And the inverse $h\inv$ is given by
\begin{displaymath}
\als{A}_h\inv =
\left(
\begin{bmatrix}
1 & . 
\end{bmatrix},
\begin{bmatrix}
-v_h & A_h \\
. & u_h
\end{bmatrix},
\begin{bmatrix}
. \\ 1
\end{bmatrix}
\right).
\end{displaymath}
\end{proposition}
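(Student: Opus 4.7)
The plan is, for each of the four operations, to verify two things: (i) the first entry of the (unique) solution $s$ of $A s = v$ equals the advertised element of $\freeFLD{\field{K}}{X}$, and (ii) the resulting triple is a genuine admissible linear system, i.e.\ $u = e_1$ (immediate in all four cases by inspection) and the new system matrix $A$ is full. For the scalar multiple $\mu f$, the identity $u_f A_f\inv(\mu v_f) = \mu f$ is immediate and $A$ is unchanged, so both (i) and (ii) are trivial.

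For the sum and the product, I would partition $s = \bigl[\begin{smallmatrix}s_1\\ s_2\end{smallmatrix}\bigr]$ to match the block structure of $A$, exploit block upper-triangularity to solve the lower block first as $s_2 = A_g\inv v_g$, and then substitute into the upper block. In the sum case this gives $s_1 = A_f\inv v_f + u_f\trp g$; since $u_f = e_1$ implies $u_f u_f\trp = 1$, the first entry is $f + g$. In the product case the coupling $-v_f u_g$ turns the upper block equation into $A_f s_1 = v_f g$, whose first entry under $u_f$ is $f g$. In both cases the new system matrix is block upper triangular with full diagonal blocks $A_f, A_g$, hence full.

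The inverse is slightly more involved since the two blocks are coupled. Writing $s = \bigl[\begin{smallmatrix}s_0\\ s_2\end{smallmatrix}\bigr]$ with $s_0$ a scalar, the top block $-v_h s_0 + A_h s_2 = 0$ gives $s_2 = A_h\inv v_h s_0$ (using fullness of $A_h$), and the bottom scalar equation $u_h s_2 = 1$ becomes $h s_0 = 1$, so $s_0 = h\inv$ as required. Fullness of the new matrix can be checked by permuting its first column to the last position, producing a block matrix with full lower-right block $A_h$ and Schur complement $u_h A_h\inv v_h = h \ne 0$, which is a unit in $\freeFLD{\field{K}}{X}$. The main obstacle is exactly this fullness verification for the inverse: the other three operations preserve fullness by a transparent block-triangular argument, while the inverse case genuinely uses the hypothesis $h \ne 0$.
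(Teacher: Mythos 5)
Your proof is correct and follows essentially the same route as the paper: verify the block solution vectors ($\mu s_f$, $\bigl[\begin{smallmatrix} s_f + u_f\trp g \\ s_g\end{smallmatrix}\bigr]$, $\bigl[\begin{smallmatrix} s_f g \\ s_g\end{smallmatrix}\bigr]$, $\bigl[\begin{smallmatrix} h\inv \\ s_h h\inv\end{smallmatrix}\bigr]$), then check fullness of each system matrix, with the inverse case resting on $h\neq 0$ via the Schur complement of the (permuted) linearization of $\als{A}_h$. The only point where you are less careful than the paper is the assertion that a block upper triangular matrix with full diagonal blocks $A_f,A_g$ is automatically full --- over a general ring this fails, and the paper justifies it by the fact that $\ncPOLY{\field{K}}{X}$ is a free ideal ring with unbounded generating number, so that diagonal sums (and hence these triangular matrices, which are associated to them) of full matrices are full.
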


\medskip
\begin{remark}
One can easily verify that 
the solution vectors for the admissible linear systems defined above are
\begin{displaymath}
\mu s_f, \quad
\begin{bmatrix} s_f + u_f\trp g \\ s_g \end{bmatrix}, \quad
\begin{bmatrix} s_f g \\ s_g \end{bmatrix} \quad\text{and}\quad
\begin{bmatrix} h\inv \\ s_h h\inv \end{bmatrix}
\end{displaymath}
respectively.
It remains to check that the system matrices are full.
For the sum and the product this is clear from the fact
that the free associative algebra ---being a \emph{free ideal ring} (FIR)---
has \emph{unbounded
generating number} (UGN) and therefore the diagonal
sum of full matrices is full, see
\cite[Section~7.3]{Cohn1985a}
.
The system matrix for the inverse is full because
$h\ne 0$ and therefore the linearization of $\als{A}_h$
is full, compare
\cite[Section~4.5]{Cohn1995a}
.
\end{remark}

\begin{Remark}\label{rem:span.lf}
For the rational operations from Proposition~\ref{pro:wp.ratop} we observe
that the left families satisfy the relations
\begin{align*}
L(\mu \als{A}_f) &= L(\als{A}_f), \\
L(\als{A}_f + \als{A}_g) &= L(\als{A}_f) + L(\als{A}_g)
  \quad\text{and}\quad \\
L(\als{A}_g) &\subseteq L(\als{A}_f \cdot \als{A}_g)
  = L(\als{A}_f) g + L(\als{A}_g).
\end{align*}
And similarly for the right families we have
\begin{align*}
R(\mu \als{A}_f) &= R(\als{A}_f), \\
R(\als{A}_f + \als{A}_g) &= R(\als{A}_f) + R(\als{A}_g)
  \quad\text{and}\quad \\
R(\als{A}_f) &\subseteq R(\als{A}_f \cdot \als{A}_g)
  = R(\als{A}_f) + f R(\als{A}_g).
\end{align*}
\end{Remark}

\begin{definition}[Linearization
\cite{Belinschi2017c}
,
\cite{Cohn1999a}
]\label{def:wp.lin}
\index{linearization}%
Let $f \in \freeFLD{\field{K}}{X}$.
A \emph{linearization} of $f$ is a matrix
$L = L_0 \otimes 1 + L_1 \otimes x_1 + \ldots + L_d \otimes x_d$,
with $L_\ell \in \field{K}^{m \times m}$,
of the form
\begin{displaymath}
L =
\begin{bmatrix}
c & u \\
v & A
\end{bmatrix}
\quad
\in \freeALG{\field{K}}{X}^{m \times m}
\end{displaymath}
such that $A$ is invertible over the free field
and $f$ is the Schur complement, that is, $f = c - u A\inv v$.
If $c=0$ then $L$ is called a \emph{pure} linearization.
The \emph{size} of the linearization is
$\size L = m$, the \emph{dimension} is $\dim L = m-1$.
\end{definition}

\begin{proposition}[\protect{%
\cite[Proposition~3.2]{Belinschi2017c}
}]\label{pro:belinschi17c}
Let $\field{F} = \freeFLD{\field{K}}{X}$ and
$A\in \field{F}^{k \times k}$, $B \in \field{F}^{k \times l}$,
$C \in \field{F}^{l \times k}$ and $D \in \field{F}^{l \times l}$
be given and assume that $D$ is invertible in $\field{F}^{l \times l}$.
Then the matrix
$\bigl[\begin{smallmatrix} A & B \\ C & D \end{smallmatrix}\bigr]$
is invertible in $\field{F}^{(k+l) \times (k+l)}$ if and only
if the \emph{Schur complement} $A-BD\inv C$ is invertible in
$\field{F}^{k \times k}$.
In this case
\begin{displaymath}
\begin{bmatrix}
A & B \\
C & D
\end{bmatrix}
\inv
= 
\begin{bmatrix}
. & . \\
. & D\inv
\end{bmatrix}
+
\begin{bmatrix}
I_k \\ -D\inv C
\end{bmatrix}
\bigl(A - BD\inv C\bigr)\inv
\begin{bmatrix}
I_k & -BD\inv
\end{bmatrix}.
\end{displaymath}
\end{proposition}

\begin{Remark}
(i) Let $f \in \freeFLD{\field{K}}{X}$ be given
by the linearization $L$. Then $f = \qdet{L}_{1,1}$
is the $(1,1)$-quasideterminant
\cite{Gelfand2005a}
\ of $L$.

(ii)
Given a linear representation $(u,A,v)$ of
$f \in \freeFLD{\field{K}}{X}$, then
$L = \bigl[\begin{smallmatrix} . & u \\ -v & A \end{smallmatrix}\bigr]$
is a pure linearization of $f$.

(iii)
Talking about a \emph{minimal} linearization, one has to
specify which class of matrices is considered: Scalar entries
in the first row and column? Pure? And, if applicable,
selfadjoint?
\end{Remark}

\begin{proposition}\label{pro:lin.ext}
Let
\begin{displaymath}
L =
\begin{bmatrix}
 c & u \\ v & A
\end{bmatrix}
\end{displaymath}
be a linearization of size $n$ for some element $f\in\freeFLD{\field{K}}{X}$ and
define another element $g\in \freeFLD{\field{K}}{X}$ by the pure linearization
\begin{displaymath}
\tilde{L} =
\begin{bmatrix} . & \tilde{u} \\ \tilde{v} & \tilde{A} \end{bmatrix}
\quad\text{with}\quad
\tilde{A} =
\begin{bmatrix}
c & u & -1 \\
v & A & . \\
-1 & . & . 
\end{bmatrix},\quad
\tilde{u} = [0,\ldots,0,1],\quad\tilde{v} = \tilde{u}\trp
\end{displaymath}
of size $n+2$.
Then $g = f$.
\end{proposition}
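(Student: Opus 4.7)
The plan is to compute $g$ directly from the pure linearization $\tilde{L}$ by its defining formula $g = -\tilde{u}\tilde{A}\inv\tilde{v}$, and to show this equals $f = c - uA\inv v$. Since both $\tilde{u}$ and $\tilde{v}$ are supported only in their last entry, the quantity $-\tilde{u}\tilde{A}\inv\tilde{v}$ is just $-\tilde{u}\tilde{s}$, where $\tilde{s}\in \freeFLD{\field{K}}{X}^{n+1}$ is the solution of the linear system $\tilde{A}\tilde{s} = \tilde{v}$; equivalently, it is the negative of the $(n+1,n+1)$-entry of $\tilde{A}\inv$.

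I would then solve this linear system by back-substitution, exploiting the sparse last row and last column of $\tilde{A}$. Partition $\tilde{s} = (\alpha, \beta\trp, \gamma)\trp$ conformally with the $1 + (n-1) + 1$ block decomposition of $\tilde{A}$. The last row of $\tilde{A}\tilde{s} = \tilde{v}$ immediately pins $\alpha$ to a scalar value; the middle $n-1$ equations then determine $\beta$ in terms of $\alpha$ via $A\inv v$, using that $A$ is invertible over $\freeFLD{\field{K}}{X}$ (which is built into the hypothesis that $L$ is a linearization of $f$); finally the first equation expresses $\gamma$ in terms of $c$, $u$, and $\beta$, and a short rearrangement yields $\gamma = -(c - uA\inv v) = -f$. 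Reading off $g = -\tilde{u}\tilde{s} = -\gamma$ then gives $g = f$.

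The only nontrivial point is to verify that $\tilde{L}$ actually is a linearization in the sense of Definition~\ref{def:wp.lin}, i.e., that $\tilde{A}$ is invertible over the free field. This is justified by the same back-substitution: for any right-hand side $w$ in place of $\tilde{v}$, the triangular-style argument furnishes a unique solution $\tilde{s} \in \freeFLD{\field{K}}{X}^{n+1}$, so $\tilde{A}$ is invertible over $\freeFLD{\field{K}}{X}$. Alternatively, one may invoke Proposition~\ref{pro:belinschi13.32} after permuting the last row and column of $\tilde{A}$ into a leading position, reducing the fullness of $\tilde{A}$ to that of $A$.
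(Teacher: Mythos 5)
Your proof is correct, but it takes a genuinely different route from the paper's. The paper computes $-\tilde{u}\tilde{A}\inv\tilde{v}$ by applying the Schur-complement inversion formula of Proposition~\ref{pro:belinschi13.32} twice: first to $\tilde{A}=\bigl[\begin{smallmatrix} L & b\trp \\ b & . \end{smallmatrix}\bigr]$ with respect to the block $L$, which collapses the quantity to $(bL\inv b\trp)\inv$, and then to $L$ with respect to the block $A$ in order to evaluate $bL\inv b\trp$ and arrive at $c-uA\inv v$. You instead solve $\tilde{A}\tilde{s}=\tilde{v}$ by back-substitution, using only the invertibility of $A$. This is more elementary, and it buys something real: the paper's chain of equalities manipulates $L\inv$ and $(bL\inv b\trp)\inv$, which by Proposition~\ref{pro:belinschi13.32} exist only when $f=c-uA\inv v$ is itself invertible, i.e.\ $f\neq 0$ (Definition~\ref{def:wp.lin} requires only $A$, not $L$, to be invertible over the free field). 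Your elimination argument covers $f=0$ as well, and it simultaneously yields unique solvability of $\tilde{A}\tilde{s}=w$ for every right-hand side, hence the fullness of $\tilde{A}$, a point the paper leaves implicit. One small caveat: your alternative justification of fullness via Proposition~\ref{pro:belinschi13.32} after permutation does not reduce matters to the fullness of $A$ alone --- taking the Schur complement with respect to $A$ leaves the $2\times 2$ matrix $\bigl[\begin{smallmatrix} . & -1 \\ -1 & f \end{smallmatrix}\bigr]$, whose invertibility for arbitrary $f$ still has to be observed; but your primary back-substitution argument already settles the point, so nothing is missing.
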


\begin{proof}
Using Proposition~\ref{pro:belinschi17c} ---taking the Schur complement with respect
to the block entry $(2,2)$--- and $b = [-1,0,\ldots,0]$,
the inverse of $\tilde{A}=\bigl[\begin{smallmatrix} L & b\trp \\ b & . \end{smallmatrix}\bigr]$
can be written as
\begin{displaymath}
\tilde{A}\inv =
  \begin{bmatrix} L\inv & . \\ . & . \end{bmatrix}
 -\begin{bmatrix} -L\inv b\trp \\ 1 \end{bmatrix}
  \bigl(b L\inv b\trp\bigr)\inv
  \begin{bmatrix} - b L\inv & 1 \end{bmatrix}.
\end{displaymath}
Hence
\begin{align*}
-\tilde{u}\tilde{A}\inv\tilde{v}
  &= -\Bigl( \begin{bmatrix} . & . \end{bmatrix}
       - (b L\inv b\trp)\inv 
       \begin{bmatrix} -b L\inv & 1 \end{bmatrix} \Bigr)
       \begin{bmatrix} . \\ 1 \end{bmatrix} \\
  &= \bigl(b L\inv b\trp\bigr)\inv \\
  &= \Biggl( b \left( \begin{bmatrix} . & . \\ . & A\inv \end{bmatrix}
               + \begin{bmatrix} 1 \\ -A\inv v \end{bmatrix}
                 \bigl(c - u A\inv v\bigr)\inv
                 \begin{bmatrix} 1 & -u A\inv \end{bmatrix} \right) b\trp \Biggr)\inv \\
  &= \Biggl( \left( \begin{bmatrix} . & . \end{bmatrix}
                 -\bigl(c - u A\inv v\bigr)\inv
                 \begin{bmatrix} 1 & -u A\inv \end{bmatrix} \right)
                 \begin{bmatrix} -1 \\ . \end{bmatrix} \Biggr)\inv \\
  &= c - u A\inv v .\qedhere
\end{align*}
\end{proof}

If the first row or column of a linearization
for some $f \in \freeFLD{\field{K}}{X}$
contains non-scalar entries, then  
Proposition \ref{pro:lin.ext} can be used
to construct a linear representation of $f$.
On the other hand, given a linear representation
of dimension $n$
(of $f$) which can be brought to such a form,
a linearization of size $n-1$ can be obtained.
The characterization of minimality for linearizations
will be considered in future work.

\begin{Example}\label{ex:anticomm}
For the anticommutator $xy + yx$ a minimal ALS is given by
\begin{displaymath}
\left(
\begin{bmatrix}
1 & . & . & .
\end{bmatrix},
\begin{bmatrix}
1 & -x & -y & . \\
. & 1 & . & -y \\
. & . & 1 & -x \\
. & . & . & 1
\end{bmatrix},
\begin{bmatrix}
. \\ . \\ . \\ 1
\end{bmatrix}
\right).
\end{displaymath}
Permuting the columns and multiplying the system matrix by $-1$ we get the linearization
\begin{displaymath}
L'_{xy+yx} =
\begin{bmatrix}
. & . & . & . & 1 \\
. & . & y & x & -1 \\
. & y & . & -1 & . \\
. & x & -1 & . & . \\
1 & -1 & . & . & .
\end{bmatrix}
\end{displaymath}
which is of the form in Proposition \ref{pro:lin.ext} and yields
a \emph{minimal} (pure) linearization of the anticommutator
\begin{displaymath}
L_{xy+yx} =
\begin{bmatrix}
. & y & x  \\
y & . & -1 \\
x & -1 & .
\end{bmatrix}.
\end{displaymath}

\end{Example}

\begin{definition}[Realization
\cite{Helton2006a}
]\label{def:wp.realization}
\index{realization}\index{butterfly realization}%
A \emph{realization} of a matrix 
$F \in \freeFLD{\field{K}}{X}^{p \times q}$
is a quadruple
$(A,B,C,D)$ with $A = A_0 \otimes 1 + A_1 \otimes x_1 + \ldots + A_d \otimes x_d$,
$A_\ell \in \field{K}^{n \times n}$, $B \in \field{K}^{n \times q}$,
$C \in \field{K}^{p \times n}$ and $D \in \field{K}^{p \times q}$
such that $A$ is invertible over the free field and $F = D - C A\inv B$.
The \emph{dimension} of the realization is $\dim\,(A,B,C,D) = n$.
\end{definition}

\begin{remark}
A realization $\mathcal{R} = (A,B,C,D)$ could be written in block form
\begin{displaymath}
L_\mathcal{R} =
\begin{bmatrix}
D & C \\
B & A
\end{bmatrix}\quad
\in \freeALG{\field{K}}{X}^{(p+n) \times (q+n)}.
\end{displaymath}
Here, the definition is such that $F = \qdet{L_\mathcal{R}}_{1',1'}$
is the $(1,1)$-block-quasideterminant
\cite{Gelfand2005a}
\ with respect to block $D$.
For $A = -J+L_A(X)$
we obtain the \emph{descriptor realization} in
\cite{Helton2006a}
.
Realizations where $B$ and/or $C$ contain non-scalar entries
are sometimes called ``butterfly realizations''
\cite{Helton2006a}
.
Minimality with respect to realizations is investigated in
\cite{Volcic2018a}
.
\end{remark}

\section{The Word Problem}\label{sec:wp.wp}

Let $f,g \in \freeFLD{\field{K}}{X}$ be given by the
linear representations $\pi_f = (u_f, A_f, v_f)$
and $\pi_g = (u_g, A_g, v_g)$ of dimension $n_f$ and $n_g$ 
respectively and define the matrix
\begin{displaymath}
L =
\begin{bmatrix}
. & u_f & u_g \\
v_f & A_f & . \\
v_g & . & -A_g
\end{bmatrix},
\end{displaymath}
which is a linearization of $f-g$,
of size $n = n_f + n_g + 1$.
Then $f=g$ if and only if $L$ is not full
\cite[Section~4.5]{Cohn1995a}
.
For the word problem see also 
\cite[Section~6.6]{Cohn1995a}
. Whether $L$ is full or not can be decided
by the following theorem.
For $P = (\alpha_{ij})$ and $Q = (\beta_{ij})$ the
\emph{commutative} polynomial ring is
\begin{align*}
\field{K}[\alpha,\beta] = \field{K}[
  &\alpha_{1,1},\ldots,\alpha_{1,n},
   \alpha_{2,1},\ldots,\alpha_{2,n},\ldots,
   \alpha_{n,1}, \ldots,\alpha_{n,n}, \\
  &\beta_{1,1},\ldots,\beta_{1,n},
   \beta_{2,1},\ldots,\beta_{2,n},\ldots,
   \beta_{n,1},\ldots,\beta_{n,n} ].
\end{align*}

\begin{theorem}[%
\protect{\cite[Theorem~4.1]{Cohn1999a}
}]\label{thr:wp.cohn99.41}
For each $r\in \{1,2,\ldots,n\}$, denote by $I_r$ the ideal
of $\field{K}[\alpha,\beta]$ generated by the polynomials $\det(P) - 1$,
$\det(Q) -1$ and the coefficients of each $x \in \{ 1 \} \cup X$
in the $(i,j)$ entries of the matrix $P L Q$ for $1 \le i \le r$,
$r \le j \le n$. Then the linear matrix $L$ is full if and only
if for each $r\in \{1,2,\ldots,n\}$, the ideal $I_r$ is trivial.
\end{theorem}

\begin{remark}
Notice that there is a misprint in
\cite{Cohn1999a}
\ and the coefficients of $L_0$ are omitted.
\end{remark}

So far we were not able to apply this theorem practically
for $n \ge 5$, where $50$ or more unknowns are involved.
However, if we have any ALS (or linear representation)
for $f-g$, say from Proposition \ref{pro:wp.ratop},
then we could check whether it can be (admissibly) transformed 
into a smaller system, for example $A's' = 0$.
For polynomials (with $A = I-Q$ and $Q$ upper triangular
and nilpotent)
this could be done row by row.
In general the pivot blocks (the blocks in the diagonal)
can be arbitrarily large.
Therefore this elimination has to be done \emph{blockwise}
by setting up a single linear system for row \emph{and}
column operations.
This idea is used in the following lemma.
Note that the existence of a solution for this linear system
is \emph{invariant} under admissible transformations
(on the subsystems). This is a
key requirement since the normal form
\cite{Cohn1994a}
\ is defined modulo similarity transformations
(more general by stable association, Definition~\ref{def:wp.ass}).

\begin{theorem}[%
\protect{\cite[Theorem 1.4]{Cohn1999a}
}]\label{thr:wp.cohn99.14}
If $\pi' = (u',A',v')$ and $\pi''=(u'',A'',v'')$ are equivalent
(pure) linear representations, of which the first is minimal,
then the second is isomorphic to a representation $(u,A,v)$
which has the block decomposition
\begin{displaymath}
u =
\begin{bmatrix}
* & u' & . 
\end{bmatrix},\quad
A =
\begin{bmatrix}
* & . & . \\
* & A' & . \\
* & * & * 
\end{bmatrix}
\quad\text{and}\quad
v = 
\begin{bmatrix}
. \\ v' \\ *
\end{bmatrix}.
\end{displaymath}
\end{theorem}

\smallskip

\begin{lemma}\label{lem:wp.ri}
Let $f,g \in \freeFLD{\field{K}}{X}$ be given by the
admissible linear systems
$\als{A}_f = (u_f, A_f, v_f)$ and $\als{A}_g = (u_g, A_g, v_g)$
of dimension $n_f$ and $n_g$ respectively.
If there exist matrices $T,U \in \field{K}^{n_f \times n_g}$
such that $u_f U = 0$, $T A_g - A_f U = A_f u_f\trp u_g$ and $T v_g = v_f$,
then $f = g$.
\end{lemma}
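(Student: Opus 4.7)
The plan is to form an admissible linear system for $f-g$ using Proposition~\ref{pro:wp.ratop} and then show that the hypothesized matrices $T$ and $U$ provide the ingredients of an admissible transformation that decouples and trivializes it. By the scalar multiplication and sum formulas, the ALS
\begin{displaymath}
\als{A}_{f-g} = \left(\begin{bmatrix} u_f & . \end{bmatrix},
\begin{bmatrix} A_f & -A_f u_f\trp u_g \\ . & A_g \end{bmatrix},
\begin{bmatrix} v_f \\ -v_g \end{bmatrix}\right)
\end{displaymath}
represents $f-g$ in the first component of its solution vector.

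Next, I would introduce the candidate transformation
\begin{displaymath}
P = \begin{bmatrix} I & T \\ . & I \end{bmatrix}, \qquad
Q = \begin{bmatrix} I & -U \\ . & I \end{bmatrix}.
\end{displaymath}
Admissibility demands that the first row of $Q$ be $e_1$; since $u_f = e_1$, the hypothesis $u_f U = 0$ says precisely that the first row of $U$ vanishes, so $Q$ is admissible. The transformation of the row vector yields $[u_f,.\,]Q = [u_f, -u_f U] = [u_f, .\,]$, again using $u_f U = 0$.

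The key calculation is to evaluate $P A Q$ and $P v$ and check that the system decouples. Using the middle hypothesis $TA_g - A_f U = A_f u_f\trp u_g$, the off-diagonal block of $PA$ becomes $-A_f u_f\trp u_g + T A_g = A_f U$, and then multiplying on the right by $Q$ cancels this block exactly, producing the block-diagonal system matrix $A_f \oplus A_g$. Using the remaining hypothesis $Tv_g = v_f$, the transformed right-hand side is $P\bigl[\begin{smallmatrix} v_f \\ -v_g\end{smallmatrix}\bigr] = \bigl[\begin{smallmatrix} v_f - Tv_g \\ -v_g\end{smallmatrix}\bigr] = \bigl[\begin{smallmatrix} . \\ -v_g\end{smallmatrix}\bigr]$.

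Finally I would conclude: the transformed (admissible) system reads
\begin{displaymath}
\left(\begin{bmatrix} u_f & .\end{bmatrix},
\begin{bmatrix} A_f & . \\ . & A_g\end{bmatrix},
\begin{bmatrix} . \\ -v_g\end{bmatrix}\right),
\end{displaymath}
whose unique solution has first block $A_f\inv \cdot 0 = 0$. Its first entry is therefore $0$, so $f-g = u_f \cdot 0 = 0$, giving $f = g$. No step looks like a genuine obstacle; the only subtle point is verifying that $Q$ is admissible, which is exactly what the first hypothesis $u_f U = 0$ delivers, and the motivation behind the middle identity is that it is the unique way to force the off-diagonal block to vanish simultaneously with the column operation by $U$.
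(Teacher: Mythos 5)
Your proposal is correct and follows essentially the same route as the paper: build the ALS for $f-g$ via Proposition~\ref{pro:wp.ratop}, apply the block-unitriangular transformation $(P,Q)$ built from $T$ and $U$, and observe that the hypotheses make the system decouple with zero right-hand side in the first block, forcing the first component of the solution (which is $f-g$) to vanish. The paper phrases the last step by tracking the solution vector $s' = Q\inv s$ explicitly and invoking $u_f U = 0$ there, but this is the same computation you perform via $u'(A')\inv v'$.
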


\begin{proof}
The difference $f - g$ can be represented by the
admissible linear system $A s = v$ with
\begin{displaymath}
A =
\begin{bmatrix} A_f & -A_f u_f\trp u_g \\ . & A_g \end{bmatrix},
\quad
s = \begin{bmatrix} s_f - u_f\trp g \\ -s_g \end{bmatrix}
\quad\text{and}\quad
v = \begin{bmatrix} v_f \\ -v_g \end{bmatrix}.
\end{displaymath}
Defining the (invertible) transformations
\begin{displaymath}
P = 
\begin{bmatrix}
I_{n_f} & T \\
. & I_{n_g}
\end{bmatrix}
\quad\text{and}\quad
Q =
\begin{bmatrix}
I_{n_f} & -U \\
. & I_{n_g}
\end{bmatrix}
\end{displaymath}
and $A' = P A Q$, $s' = Q\inv s$ and $v' = P v$ we get a
new system $A' s' = v'$:
\begin{align*}
A' &= 
\begin{bmatrix} I_{n_f} & T \\ . & I_{n_g} \end{bmatrix}
\begin{bmatrix} A_f & -A_f u_f\trp u_g \\ . & A_g \end{bmatrix}
\begin{bmatrix} I_{n_f} & -U \\ . & I_{n_g} \end{bmatrix} \\
  &=
\begin{bmatrix} A_f & - A_f u_f\trp u_g + TA_g \\ . & A_g \end{bmatrix}
\begin{bmatrix} I_{n_f} & -U \\ . & I_{n_g} \end{bmatrix} \\
  &=
\begin{bmatrix} A_f & -A_f u_f\trp u_g + T A_g - A_f U \\ . & A_g \end{bmatrix}
  =
\begin{bmatrix} A_f & 0 \\ . & A_g \end{bmatrix},\\
s' &= 
\begin{bmatrix} I_{n_f} & U \\ . & I_{n_g} \end{bmatrix}
\begin{bmatrix} s_f - u_f\trp g \\ -s_g \end{bmatrix}
  =
\begin{bmatrix} s_f - u_f\trp g - U s_g \\ -s_g \end{bmatrix}, \\
v' &=
\begin{bmatrix} I_{n_f} & T \\ . & I_{n_g} \end{bmatrix}
\begin{bmatrix} v_f \\ -v_g \end{bmatrix}
  =
\begin{bmatrix} v_f -T v_g \\ -v_g \end{bmatrix}.
\end{align*}
Invertibility of $A'$ over the free field implies $s_f - u_f\trp g - U s_g = 0$,
in particular 
\begin{align*}
0 &= u_f s_f - u_f u_f\trp g - u_f U s_g \\
  &= f - g
\end{align*}
because $u_f U = 0$.
\end{proof}

Let $d$ be the number of letters in the alphabet $X$,
$\dim \als{A}_f = n_f$ and $\dim \als{A}_g = n_g$.
To determine the transformation matrices $T,U \in \field{K}^{n_f \times n_g}$
from the lemma we just have to solve a linear system of 
$(d+1)n_f(n_g+1)$ equations in $2 n_f n_g$ unknowns.
If there is a solution then $f=g$.
Neither $\als{A}_f$ nor $\als{A}_g$ have to be minimal.
Computer experiments show, that Hua's identity
\cite{Amitsur1966a}
\begin{displaymath}
x - \bigl(x\inv + (y\inv - x)\inv \bigr)\inv = xyx
\end{displaymath}
can be tested positively by Lemma~\ref{lem:wp.ri} when
the left hand side is constructed 
by the rational operations from Proposition \ref{pro:wp.ratop}.
However, without assuming minimality,
the fact that there is no solution does \emph{not} imply,
that $f\neq g$, see Example~\ref{ex:wp.path} below.

\begin{theorem}[``Linear solution'' of the Word Problem]\label{thr:wp.wp}
Let $f,g \in \freeFLD{\field{K}}{X}$ be given by the
\emph{minimal} admissible linear systems
$\als{A}_f = (u_f, A_f, v_f)$ and $\als{A}_g = (u_g, A_g, v_g)$
of dimension $n$ respectively.
Then $f = g$ if and only if there exist matrices $T,U \in \field{K}^{n\times n}$
such that $u_f U = 0$, $T A_g - A_f U= A_f u_f\trp u_g$ and $T v_g = v_f$.
\end{theorem}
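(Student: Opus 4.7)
The "if" direction is immediate: the hypothesis is precisely that of Lemma~\ref{lem:wp.ri}, which was proved without any minimality assumption. So the real content lies in the converse.

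Assume $f = g$ and that $\als{A}_f$ and $\als{A}_g$ are both minimal of common dimension $n$. The plan is to invoke Theorem~\ref{thr:cohn99.14} with $\pi' = \als{A}_f$ and $\pi'' = \als{A}_g$: since $\pi'$ is minimal and $\pi''$ is equivalent to it, $\pi''$ is isomorphic to a representation with the stated block decomposition. The total dimension of that block decomposition is $n_1 + n + n_2 = n$, forcing $n_1 = n_2 = 0$ and collapsing the block structure to the middle block alone. Thus $\als{A}_g$ is isomorphic to $\als{A}_f$, i.e.\ there exist invertible $P,Q \in \field{K}^{n \times n}$ with
\begin{displaymath}
u_g = u_f Q,\qquad A_g = P A_f Q,\qquad v_g = P v_f.
\end{displaymath}
Because both systems are admissible, $u_f = u_g = e_1$; substituting into $u_g = u_f Q$ shows that the first row of $Q$ is $e_1$ automatically.

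The natural candidates are then $T = P\inv$ and $U = Q - u_f\trp u_g$, and I would check the three required identities in turn. First, $u_f U = u_f Q - (u_f u_f\trp)\, u_g = u_g - u_g = 0$, using $u_f Q = u_g$ and $u_f u_f\trp = 1$. Second, $T v_g = P\inv (P v_f) = v_f$. Third,
\begin{displaymath}
T A_g - A_f U = P\inv(P A_f Q) - A_f Q + A_f u_f\trp u_g = A_f u_f\trp u_g,
\end{displaymath}
as desired.

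The only genuinely non-trivial step is the extraction of the invertible pair $(P,Q)$ from Theorem~\ref{thr:cohn99.14}; everything else is bookkeeping. The telling observation is that the equation $T A_g = A_f(U + u_f\trp u_g)$ forces $U + u_f\trp u_g$ to play the role of $Q$ and $T$ the role of $P\inv$, which is exactly why the linear system $(u_f U, T A_g - A_f U, T v_g) = (0, A_f u_f\trp u_g, v_f)$ captures \emph{admissible} similarity of minimal ALS and hence, by the Cohn--Reutenauer normal form, precisely the relation $f = g$.
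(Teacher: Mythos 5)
Your proof is correct and follows essentially the same route as the paper: the ``if'' direction is Lemma~\ref{lem:wp.ri}, and the converse extracts an invertible pair $(P,Q)$ from Theorem~\ref{thr:cohn99.14} (the outer blocks collapsing by dimension count) and sets $T,U$ exactly as the paper does, up to the orientation of the isomorphism. The only cosmetic difference is that you read off ``the first row of $Q$ is $e_1$'' from the $u$-component of the isomorphism, whereas the paper derives it from the $\field{K}$-linear independence of the left family of the minimal system $\als{A}_g$.
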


\begin{proof}
If $f=g$ then, since admissible linear systems correspond to
(pure) linear representations, by Theorem~\ref{thr:wp.cohn99.14}
there exist invertible matrices
$P, Q \in \field{K}^{n\times n}$ 
such that $A_f = P A_g Q$ and $v_f = P v_g$.
Let $T = P$ and $U = Q\inv - u_f\trp u_g$.
The admissible linear systems are minimal. Hence, the left family
$s_f$
is $\field{K}$-linearly independent.
Since the first component of $s_g$ is equal to the first component
of $s_f = Q\inv s_g$ and the left family $s_g$ is $\field{K}$-linearly
independent, the first row of $Q\inv$
must be $[1,0,\ldots,0]$.
Therefore $u_f U = u_f(Q\inv - u_f\trp u_g) = 0$.
Clearly $v_f = T v_g$ and
\begin{align*}
T A_g - A_f U & = P A_g - A_f Q\inv + A_f u_f\trp u_g \\
  &= A_f u_f\trp u_g.
\end{align*}
The other implication follows from Lemma~\ref{lem:wp.ri}.
\end{proof}

\begin{Example}\label{ex:wp.path}
Let $f=x\inv$ and $g=x\inv$ be given by
the admissible linear systems
\begin{displaymath}
[x] s_f = [1]
\quad\text{and}\quad
\begin{bmatrix}
x & -z \\
. & 1
\end{bmatrix}
s_g
=
\begin{bmatrix}
1 \\ .
\end{bmatrix}
\end{displaymath}
respectively. Then the ALS
\begin{displaymath}
\begin{bmatrix}
x & -x & . \\
. & x & -z \\
. & . & 1
\end{bmatrix}
s =
\begin{bmatrix}
1 \\ -1 \\ .
\end{bmatrix},\quad
s =
\begin{bmatrix}
0 \\ -x\inv \\ 0
\end{bmatrix}
\end{displaymath}
represents $f - g = 0$.
\end{Example}

While it is obvious here that the second component of the
solution vector $s_g$ is zero, it is not clear in general how one can
exclude such ``pathological'' LRs
without minimality assumption.
One might ask,
for which class of constructions (rational operations 
\cite{Cohn1999a}
, Higman's trick\index{Higman's trick}
\cite{Higman1940a,Cohn1985a}
,
selfadjoint linearization trick\index{selfadjoint linearization trick}
\cite{Anderson2013a}
, etc.) there are sufficient conditions for
the existence of matrices $T,U$ (over $\field{K}$) 
in Lemma~\ref{lem:wp.ri} if $f=g$.
Unfortunately this seems to be impossible except for some specific examples
as the following ALS (constructed by the
rational operations from Proposition~\ref{pro:wp.ratop})
for $x - x y y\inv = 0$ suggests (some zeros are kept to emphasize the block structure):
\begin{displaymath}
\begin{bmatrix}
1 & -x & -1 & . & . & . & . & . & . \\
0 & 1 & . & . & . & . & . & . & . \\
. & . & 1 & -x & . & . & . & . & . \\
. & . & 0 &  1 & -1 & . & . & . & . \\
. & . & . & . & 0 & 1 & -y & . & . \\
. & . & . & . & -1 & 0 & 1 & . & . \\
. & . & . & . & 0 & 1 & 0 & -1 & . \\
. & . & . & . & . & . & . & 1 & -y \\
. & . & . & . & . & . & . & 0 & 1 
\end{bmatrix}
s =
\begin{bmatrix}
. \\ 1 \\ . \\ . \\ . \\ . \\ . \\ . \\ -1
\end{bmatrix}.
\end{displaymath}
There are no $T,U$ and therefore no $P,Q$ (admissible, with blocks $T,U$)
such that $PAQ$ has a $2\times 7$ upper
right block of zeros and the first two components of
$Pv$ are zero. Therefore we would like to construct
minimal linear representations directly. For regular
elements, algorithms are known, see Section~\ref{sec:wp.reg}.
How to proceed in general is open except for
the inverse. 
This is discussed in Section~\ref{sec:wp.min}.

\section{Regular Elements}\label{sec:wp.reg}

For regular elements (Definition \ref{def:wp.reg})
in the free field 
minimal linear representations can be obtained 
via the Extended Ho-Algorithm 
\cite{Fornasini1980a}
\ from the Hankel matrix\index{Hankel matrix}
or by minimizing a given linear representation
via the algorithm in \cite{Cardon1980a}
\ by detecting linearly dependent rows in the
controllability matrix and linearly dependent columns in the
observability matrix, see Definition~\ref{def:mtx.co}.
The basic idea goes back to Schützenberger
\cite{Schutzenberger1961b}
.
Controllability and observability is discussed in
\cite[Chapter~10]{Kalman1969a}
.

\smallskip
For an alphabet $X = \{ x_1, x_2, \ldots, x_d \}$ (finite, non-empty set),
the free monoid generated by $X$ is denoted by $X^*$.
A formal power series (in non-commuting variables)
is a mapping $f$ from $X^*$ to a commutative field $\field{K}$,
written as formal sum
\begin{displaymath}
f = \sum_{w\in X^*} (f,w)\, w
\end{displaymath}
with coefficients $(f,w)\in \field{K}$.
In general,
$\field{K}$ could be replaced by a ring or a skew field.
\cite{Salomaa1978a}
\ and
\cite{Berstel2011a}
\ contain detailed introductions.
On the set of formal power series
$\ncPOWS{\field{K}}{X}$ the following
rational operations are defined for $f,g,h \in \ncPOWS{\field{K}}{X}$
with $(h,1) = 0$, 
and $\mu \in \field{K}$:

\smallskip
\noindent
The scalar multiplication
\begin{displaymath}
\mu f = \sum_{w\in X^*} \mu(f,w) \, w.
\end{displaymath}
The sum
\begin{displaymath}
f + g = \sum_{w\in X^*} \bigl( (f,w) + (g,w) \bigr) \, w.
\end{displaymath}
The product
\begin{displaymath}
f g = \sum_{w\in X^*} \left( \sum_{uv=w} (f,u)(g,v) \right) \, w.
\end{displaymath}
And the quasiinverse
\begin{displaymath}
h^+ = \sum_{n\ge 1} h^n.
\end{displaymath}

The set of non-commutative (nc) rational series $\ncRATS{\field{K}}{X}$
is the smallest rationally closed (that is, closed under scalar
multiplication, sum, product and quasiinverse) subset
of $\ncPOWS{\field{K}}{X}$
containing the nc polynomials $\freeALG{\field{K}}{X}$.
A series $f\in \ncPOWS{\field{K}}{X}$ is called \emph{recognizable}
if there exists a natural number $n$, a monoid homomorphism
$\mu : X^* \to \field{K}^{n\times n}$ and two vectors
$\alpha\in\field{K}^{1 \times n}$, $\beta\in\field{K}^{n\times 1}$
such that $f$ can be written as
\begin{displaymath}
f = \sum_{w\in X^*} \alpha\, \mu(w)\, \beta\, w.
\end{displaymath}
The triple $(\alpha, \mu, \beta)$ is called a \emph{linear representation}
\cite[Section~II.2]{Salomaa1978a}
.

\begin{theorem}[\cite{Schutzenberger1961b}
] 
A series $f\in \ncPOWS{\field{K}}{X}$ is rational if and only
if it is recognizable.
\end{theorem}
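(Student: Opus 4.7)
The plan is to prove the two directions separately. For the direction from rational to recognizable, I would show that the class of recognizable series contains the polynomials and is closed under all four rational operations; since $\ncRATS{\field{K}}{X}$ is by definition the smallest such class, the containment follows. Polynomials of bounded degree admit obvious recognizable representations, for instance by indexing the state space by prefixes. Scalar multiplication scales $\beta$, and sums use a block-diagonal $\mu_{f+g}(x) = \mu_f(x) \oplus \mu_g(x)$ with $\alpha$ and $\beta$ stacked correspondingly.

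For the product $fg$ one uses a block-upper-triangular construction of dimension $n_f + n_g$ whose upper-right block of $\mu_{fg}(w)$ encodes $\sum_{uv=w}\mu_f(u)\beta_f\alpha_g\mu_g(v)$; the multiplicativity $\mu_{fg}(w_1)\mu_{fg}(w_2) = \mu_{fg}(w_1 w_2)$ then reduces to partitioning decompositions of $w_1 w_2$ according to whether the cut lies weakly before or strictly after the boundary between $w_1$ and $w_2$. The quasi-inverse $h^+$ (with $(h,1)=0$) is handled by an analogous block construction reflecting the recursion $h^+ = h + h h^+$; the constant-term hypothesis is precisely what makes this recursion summable at each coefficient.

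For the converse direction, given a representation $(\alpha, \mu, \beta)$ of dimension $n$, I would write $f = \alpha(I-M)\inv\beta$, where $M = \sum_i \mu(x_i) \otimes x_i$ is a matrix whose entries are linear polynomials with zero constant term. Setting $s = (I-M)\inv\beta$, the vector $s$ satisfies $s = \beta + M s$, and its first component reads $s_1 = \beta_1 + M_{11} s_1 + \sum_{j>1} M_{1j} s_j$. Since $(M_{11}, 1) = 0$, Arden's lemma --- which is exactly the scalar quasi-inverse operation --- yields $s_1 = M_{11}^{\star} \bigl(\beta_1 + \sum_{j>1} M_{1j} s_j\bigr)$ with $M_{11}^{\star} = 1 + M_{11}^+$. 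Substituting back eliminates $s_1$ and produces an $(n-1)$-dimensional linear system whose coefficients are rational and still have zero constant term on the diagonal, so induction on $n$ shows every $s_i$ is rational, hence so is $f = \alpha s$.

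The main obstacle I expect is the verification of homomorphy for the product construction: the off-diagonal block of $\mu_{fg}$ cannot simply be prescribed on letters and extended freely by multiplicativity, so one must either define $\mu_{fg}$ on all words simultaneously and check the monoid-homomorphism axioms, or make a careful choice on letters involving $\beta_f\alpha_g$ coupled with $\mu_f(x)$ and $\mu_g(x)$ and verify consistency. All remaining closure steps and the Gaussian elimination in the converse are routine.
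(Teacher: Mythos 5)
The paper does not prove this statement at all: it is quoted from Sch\"utzenberger's 1961 paper as a known result, so there is no internal proof to compare against. Your argument is the classical Kleene--Sch\"utzenberger proof (as in Salomaa--Soittola, Section~II.5, or Berstel--Reutenauer), and it is correct in outline. For the direction ``rational $\Rightarrow$ recognizable'' you rightly reduce to showing that recognizable series contain the polynomials and are closed under the four rational operations; the only delicate point is the one you flag yourself, namely that the off-diagonal block of the product representation $\tau(w)=\sum_{uv=w}\mu_f(u)\beta_f\alpha_g\mu_g(v)$ double-counts the factorization at the boundary unless one restricts to, say, $v\neq w$ (and compensates in $\alpha$, $\beta$); with that convention the homomorphism identity $\tau(w_1w_2)=\mu_f(w_1)\tau(w_2)+\tau(w_1)\mu_g(w_2)$ holds, and the same device (or the rank-one perturbation $\nu(x)=\mu(x)(I+\beta\alpha)$) handles $h^+$ using $(h,1)=0$. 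For the converse, your Gaussian elimination via Arden's lemma is exactly the mechanism the paper itself uses implicitly in Section~\ref{sec:wp.reg} when it writes $s=(I+Q^+)v$ for a proper linear system; the only care needed is to state the induction for proper linear systems whose coefficients are \emph{rational} series with zero constant term rather than polynomials, since after eliminating $s_1$ the updated entries $M_{ij}+M_{i1}M_{11}^{\star}M_{1j}$ are no longer polynomial --- but they do retain zero constant term, so the induction closes. I see no gap.
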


A rational series $f$ can be represented by a \emph{proper
linear system} (PLS for short) $s = v + Qs$
where $f$ is the
first component of the \emph{unique} solution vector $s$
(with $v \in \field{K}^{n \times 1}$, 
$Q = Q_1 \otimes x_1 + Q_2 \otimes x_2 + \ldots + Q_d \otimes x_d$,
$Q_i \in \field{K}^{n \times n}$ for some $n\in \numN$).
Rational operations are then formulated on this level
\cite[Section~II.1]{Salomaa1978a}
. Clearly, every proper linear system gives rise to an admissible
linear system $\als{A} = (u, I-Q, v)$ with $u = e_1$.
When $(\alpha, \mu, \beta)$ is a linear representation
of a recognizable series, then $\pi = (\alpha, I-Q, \beta)$
with
\begin{equation}\label{eqn:wp.regQ}
Q = \sum_{x\in X} \mu(x) \otimes x.
\end{equation}
is a linear representation of $f \in \freeFLD{\field{K}}{X}$.
For a PLS the solution vector $s$ can be computed by
the quasiinverse $Q\quinv$:
\begin{align*}
s &= (I-Q)\inv v \\
  &= (I+Q\quinv) v \\
  &= (I + Q + Q^2 + Q^3 + \ldots) v.
\end{align*}

\begin{definition}[Controllability and Observability Matrix]\label{def:mtx.co}
\index{controllability matrix}\index{observability matrix}%
Let $\als{P} = (u,I-Q,v)$ be a proper linear system of dimension $n$
(for some nc rational series).
Then the \emph{controllability matrix} and the \emph{observability matrix}
are defined as
\begin{displaymath}
\mathcal{V} =
\mathcal{V}(\als{P}) = \begin{bmatrix} v &  Qv & \ldots & Q^{n-1}v \end{bmatrix}
\quad\text{and}\quad
\mathcal{U} = 
\mathcal{U}(\als{P}) = 
\begin{bmatrix}
u \\ uQ \\ \vdots \\ uQ^{n-1}
\end{bmatrix}
\end{displaymath}

\vspace{-1.5ex}
\noindent
respectively.
\end{definition}

\begin{remark}
Note that the monomials (in the polynomials) in $Q^k$ have length $k$.
The matrices $\mathcal{V}$ and $\mathcal{U}$ 
are over $\freeALG{\field{K}}{X}$.
A priori these matrices would have an infinite number of
columns and rows respectively. However,
by
\cite[Lemma~6.6.3]{Cohn1995a}
, it suffices to use the columns of $\mathcal{V}$ and rows of $\mathcal{U}$
only.
This gives the connection to
\cite[Section I.2]{Berstel2011a}
\ and could be used for minimizing proper linear systems
\cite{Salomaa1978a}
.
In other words: Instead of identifying $\field{K}$-linear dependence
of the left family $s = (I-Q)\inv v = (I + Q + Q^2 + \ldots) v$,
we can restrict to the ``approximated'' left family
$\tilde{s} = (I + Q + \ldots + Q^{n-1})v$.
\end{remark}

Now let $X_{k}^* \subseteq X^*$ denote the set of words of length $k$
and use $\mu:X^* \to \field{K}^{n \times n}$ from~\eqref{eqn:wp.regQ}
to define
\raisebox{0pt}[0pt][0pt]{$V_k \in \field{K}^{n \times d^k}$}
with columns $\mu(w) v$ for $w \in X_k^*$ and
\raisebox{0pt}[0pt][0pt]{$U_k \in \field{K}^{d^k \times n}$}
with rows $u \mu(w)$ for $w \in X_k^*$.
Then the \emph{controllability matrix} and 
the \emph{observability matrix} can be defined alternatively as
\vspace{-1.5ex}
\begin{displaymath}
\mathcal{V}' =
\begin{bmatrix}
V_0 & V_1 & \ldots & V_{n-1}
\end{bmatrix}
\quad\text{and}\quad
\mathcal{U}' =
\begin{bmatrix}
U_0 \\ U_1 \\ \vdots \\ U_{n-1}
\end{bmatrix}
\quad\text{respectively}
\end{displaymath}
with entries in $\field{K}$.
Note that the rank of $\mathcal{V}'$ is at most $n$ while
the number of columns of the blocks $V_k$ is $d^k$.
So ---for an alphabet with more than one letter--- most of
the columns are not needed.
For $X=\{ x \}$ and $Q = Q_x \otimes x$ we have $V_k = Q_x^k v$
and $U_k = u Q_x^k$. Hence $\mathcal{V}'$ and $\mathcal{U}'$ can be written
as
\vspace{-1.5ex}
\begin{displaymath}
\mathcal{V}' =
\begin{bmatrix}
v & Q_x v & \ldots & Q_x^{n-1} v
\end{bmatrix}
\quad\text{and}\quad
\mathcal{U}' =
\begin{bmatrix}
u \\ u Q_x \\ \vdots \\ u Q_x^{n-1}
\end{bmatrix}
\quad\text{respectively.}
\end{displaymath}
Compare with 
\cite[Section~6.3]{Kalman1969a}
. For controllability and observability in connection with realizations
(Definition~\ref{def:wp.realization}) see also 
\cite{Helton2006a}
.


\section{Minimizing the Inverse}\label{sec:wp.min}

Having solved the word problem, our next goal is ``minimal arithmetics''
in the free field $\freeFLD{\field{K}}{X}$. That is, 
given elements by minimal admissible linear systems,
to compute minimal ones for the rational operations.
For the scalar multiplication this is trivial.
For the inverse some preparation is necessary.
The result is presented in Theorem~\ref{thr:wp.mininv}.
The ``minimal sum'' and the ``minimal product''
are considered in future work.
The main difficulty is \emph{not} minimality but
the restriction to \emph{linear} techniques.

\begin{proposition}\label{pro:min.mon}
Let $k \in \numN$ and $f= x_{i_1} x_{i_2} \cdots x_{i_k}$ be a monomial in
$\freeALG{\field{K}}{X} \subseteq \freeFLD{\field{K}}{X}$.
Then
\begin{displaymath}
\als{A} = \left(
\begin{bmatrix}
1 & .  & \cdots & .
\end{bmatrix},
\begin{bmatrix}
1 & -x_{i_1} \\
  & 1 & -x_{i_2} \\
  & & \ddots & \ddots \\
  & & & 1 & -x_{i_k} \\
  & & & & 1
\end{bmatrix},
\begin{bmatrix}
. \\ .  \\ \vdots \\ .  \\ 1
\end{bmatrix}
\right)
\end{displaymath}
is a \emph{minimal} ALS of dimension $\dim \als{A} = k+1$.
\end{proposition}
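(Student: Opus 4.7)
The plan is to verify directly that $\als{A}$ represents $f$, then establish minimality via Proposition~\ref{pro:cohn94.47} by exhibiting the left and right families explicitly.

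First I would write $A = I - N$, where $N$ is the strictly upper triangular (hence nilpotent) matrix with $N_{i,i+1} = x_{i_i}$ for $i=1,\ldots,k$ and zero elsewhere. Nilpotency of $N$ gives $A\inv = \sum_{j=0}^{k} N^j$, which in particular shows that $A$ is invertible over $\ncPOLY{\field{K}}{X}$, hence \emph{full}. So $\als{A}$ is indeed an admissible linear system of dimension $k+1$.

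Next I would compute the left and right families explicitly. Since $(N^j)_{i,i+j} = x_{i_i} x_{i_{i+1}}\cdots x_{i_{i+j-1}}$ and all other entries of $N^j$ vanish, the $(i,l)$-entry of $A\inv$ equals $x_{i_i} x_{i_{i+1}}\cdots x_{i_{l-1}}$ for $i\le l$ (with the convention that this is $1$ when $i=l$) and $0$ otherwise. With $v = e_{k+1}$ this yields the left family
\begin{displaymath}
s_i = (A\inv v)_i = x_{i_i} x_{i_{i+1}} \cdots x_{i_k}, \quad i=1,\ldots,k,\qquad s_{k+1} = 1,
\end{displaymath}
so $f = u s = s_1 = x_{i_1} x_{i_2}\cdots x_{i_k}$, confirming that $\als{A}$ represents $f$. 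With $u = e_1\trp$ the right family is
\begin{displaymath}
t_1 = 1, \qquad t_j = (u A\inv)_j = x_{i_1} x_{i_2} \cdots x_{i_{j-1}}, \quad j=2,\ldots,k+1.
\end{displaymath}

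To conclude, I would observe that the entries of the left family are monomials of pairwise distinct lengths $k, k-1, \ldots, 1, 0$, and likewise for the right family with lengths $0, 1, \ldots, k$. Monomials of distinct lengths are $\field{K}$-linearly independent in $\ncPOLY{\field{K}}{X}\subseteq\freeFLD{\field{K}}{X}$, so both families are linearly independent. Minimality then follows from Proposition~\ref{pro:cohn94.47}. I do not anticipate any genuine obstacle here; the only thing to be careful about is keeping the index bookkeeping in the expansion of $A\inv = \sum N^j$ straight so that the entries of $s$ and $t$ come out as the correct suffixes and prefixes of $x_{i_1}\cdots x_{i_k}$.
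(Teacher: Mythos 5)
Your proof is correct, but it takes a different route to minimality than the paper does. You compute the left family $(x_{i_i}\cdots x_{i_k})_i$ and the right family $(x_{i_1}\cdots x_{i_{j-1}})_j$ explicitly from $A\inv=\sum_j N^j$, observe that each consists of words of pairwise distinct lengths (hence $\field{K}$-linearly independent), and invoke the Cohn--Reutenauer criterion, Proposition~\ref{pro:cohn94.47}. The paper instead only asserts fullness of the system matrix and then argues via the Hankel rank: it exhibits the $(k+1)\times(k+1)$ submatrix of the Hankel matrix of $f$ indexed by the prefixes $1,x_{i_1},\ldots,x_{i_1}\cdots x_{i_k}$ (rows) and suffixes $1,x_{i_k},\ldots,x_{i_1}\cdots x_{i_k}$ (columns), which is an antidiagonal of ones and so has rank $k+1$; since $f$ is regular, the Hankel rank equals the minimal dimension. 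The two arguments are close cousins --- the paper's Hankel submatrix is precisely the pairing of your right-family prefixes against your left-family suffixes --- but yours is more self-contained relative to what the paper actually states and proves, since Proposition~\ref{pro:cohn94.47} is quoted in the text whereas the identification of rank with Hankel rank for regular elements is only cited from the literature. The paper's version buys brevity and a template that generalizes to arbitrary rational series via Hankel matrices; yours additionally supplies the explicit verification that $\als{A}$ represents $f$ and that $A$ is full, both of which the paper leaves to the reader.
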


\begin{proof}
The system matrix of $\als{A}$ is full.
For row indices $[1,x_{i_1},x_{i_1}x_{i_2},$ $\ldots,x_{i_1}\cdots x_{i_k}]$ and
column indices $[1,x_{i_k},x_{i_{k-1}}x_{i_k},$ $\ldots,x_{i_1}\cdots x_{i_k}]$
the Hankel matrix 
\cite{Fliess1974a}
,
\cite[Section~II.3]{Salomaa1978a}
\ of $f$ is
\begin{displaymath}
H(f) = 
\begin{bmatrix}
& & & 1 \\
& & 1 \\
& \revddots \\
1
\end{bmatrix}
\end{displaymath}
and has rank $k+1$.
\end{proof}

\begin{remark}
Trivially, $\als{A} = ([1], [1], [1])$ is a minimal ALS 
for the \emph{unit element} (empty word).
\end{remark}

The following proposition is a variant of the inverse in
Proposition~\ref{pro:wp.ratop} and is motivated by 
inverting the inverse of a monomial, for example,
$f = (xyz)\inv$. A minimal ALS for $f$
is given by
\begin{displaymath}
\begin{bmatrix}
z & -1 & . \\
. & y & -1 \\
. & . & x
\end{bmatrix}
s =
\begin{bmatrix}
. \\ . \\ 1
\end{bmatrix},
\quad
s =
\begin{bmatrix}
z\inv y\inv x\inv \\
y\inv x\inv \\
x\inv
\end{bmatrix}.
\end{displaymath}
Minimality is clear immediately by also checking the $\field{K}$-linear
independence of the right family. Using the construction of the inverse
from Proposition~\ref{pro:wp.ratop} we get the system
\begin{displaymath}
\begin{bmatrix}
. & z & -1 & . \\
. & . & y & -1 \\
-1 & . & . & x \\
. & 1 & . & . 
\end{bmatrix}
s =
\begin{bmatrix}
. \\ . \\ . \\ 1
\end{bmatrix},
\quad
s =
\begin{bmatrix}
xyz \\
1 \\
z \\
yz
\end{bmatrix}.
\end{displaymath}
for $f\inv = xyz$. To obtain the form of Proposition~\ref{pro:min.mon}
we have to reverse the rows $1,2,3$ and the columns $2,3,4$
and multiply the rows $1,2,3$ by $-1$.

\begin{proposition}[Standard Inverse]\label{pro:inv.std}
Let $0 \neq f \in \freeFLD{\field{K}}{X}$ 
be given by the admissible linear system $\als{A} = (u,A,v)$
of dimension $n$.
Then an admissible linear system of dimension $n+1$
for $f\inv$ is given by
\begin{equation}\label{eqn:wp.inv0}
\als{A}\inv = \left(
\begin{bmatrix}
1 & . \\
\end{bmatrix},
\begin{bmatrix}
\perm v & -\perm A \perm \\
. & u \perm
\end{bmatrix},
\begin{bmatrix}
. \\ 1
\end{bmatrix}
\right).
\end{equation}
(Recall that the permutation matrix $\perm = \perm_n$ reverses the order of rows/columns.)
\end{proposition}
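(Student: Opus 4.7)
The plan is to verify three things: (a) admissibility, i.e.\ that the first row of the new system is $e_1$, which is immediate from the construction; (b) that the first component of the unique solution vector of $\als{A}\inv$ equals $f\inv$; (c) that the new system matrix is full over the free field. The key algebraic fact I would exploit is that $\perm\inv = \perm$, so that $(\perm A \perm)\inv = \perm A\inv \perm$, and the ``reversed'' triple $(u\perm, \perm A \perm, \perm v)$ again represents $f$, with the distinguished entry now sitting at the \emph{end} of the solution vector.

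First I would compute the solution directly. Denote the system matrix of~\eqref{eqn:wp.inv0} by $B$ and write the solution as $s=\bigl[\begin{smallmatrix} s_1 \\ \tilde s \end{smallmatrix}\bigr]$ with $\tilde s \in \freeFLD{\field{K}}{X}^n$. The top block of $Bs=[0,\ldots,0,1]\trp$ reads $\perm v \cdot s_1 - \perm A \perm\, \tilde s = 0$, whence (using that $A$ is full and $\perm\inv=\perm$) $\tilde s = \perm A\inv v \cdot s_1$. The last scalar equation reads $u\perm \cdot \tilde s = 1$; substituting yields $u A\inv v \cdot s_1 = f s_1 = 1$, so $s_1 = f\inv$, as required.

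For fullness, I would appeal to Proposition~\ref{pro:belinschi13.32}. Permuting $B$ by moving column~$1$ to the last column and row~$n+1$ to the last row (two operations that do not affect invertibility) gives the block form
\begin{displaymath}
\begin{bmatrix} -\perm A \perm & \perm v \\ u \perm & 0 \end{bmatrix},
\end{displaymath}
whose top-left $n\times n$ block $-\perm A \perm$ is invertible over $\freeFLD{\field{K}}{X}$ because $A$ is full. The Schur complement with respect to this block is
\begin{displaymath}
0 - u\perm\cdot(-\perm A \perm)\inv\cdot \perm v \;=\; u A\inv v \;=\; f,
\end{displaymath}
which is invertible in $\freeFLD{\field{K}}{X}$ precisely because $f\neq 0$. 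Hence the permuted matrix, and therefore $B$ itself, is invertible over the free field, so $B$ is full.

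The only care needed is bookkeeping of the signs and permutations, but there is no substantive obstacle: the construction is effectively the inverse formula of Proposition~\ref{pro:wp.ratop} applied to the reversed linear representation $(u\perm,\perm A\perm,\perm v)$, followed by negation of the top $n$ rows, which is an admissible transformation. Both viewpoints lead to the same verification, and the Schur-complement computation collapsing to $f$ is what makes the whole argument go through.
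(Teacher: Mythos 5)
Your proof is correct and follows essentially the same route as the paper: the paper's proof simply states that the solution vector of $\als{A}\inv$ is $\bigl[\begin{smallmatrix} f\inv \\ \perm_n s_f f\inv \end{smallmatrix}\bigr]$ and refers back to Proposition~\ref{pro:wp.ratop}, which is exactly the computation you carry out explicitly. The only difference is that you spell out the fullness of the new system matrix via the Schur-complement criterion of Proposition~\ref{pro:belinschi13.32} (reducing to $f\neq 0$), whereas the paper delegates this to the fullness argument already given for the inverse in Proposition~\ref{pro:wp.ratop}; both amount to the same observation.
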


\begin{definition}[Standard Inverse]
\index{standard inverse}
Let $\als{A}$ be an ALS for a non-zero element.
We call the ALS~\eqref{eqn:wp.inv0} the \emph{standard inverse} of $\als{A}$,
denoted by $\als{A}^{-1}$.
\end{definition}

\begin{proof}
The reader can easily verify that the solution vector of $\als{A}\inv$ is
\begin{displaymath}
\begin{bmatrix}
f\inv \\ \perm_n s_f f\inv
\end{bmatrix}.
\end{displaymath}
Compare with Proposition~\ref{pro:wp.ratop}.
\end{proof}

We proceed with the calculation of minimal admissible linear systems
for the inverse. We distinguish four types of ALS according to the
form of the system matrix.
Later, in the remark following Lemma~\ref{lem:wp.for1} and~\ref{lem:wp.for2},
we will see how to bring a system matrix to one of these forms depending
on the left and right families.

\begin{lemma}[Inverse Type~$(1,1)$]\label{lem:wp.inv3}
Assume that $0 \neq f \in \freeFLD{\field{K}}{X}$
has a \emph{minimal} admissible linear system of dimension $n$ of the
form
\begin{equation}\label{eqn:wp.inv3a}
\als{A} = \left(
\begin{bmatrix}
1 & . & .
\end{bmatrix},
\begin{bmatrix}
1 & b' & b \\
. & B & b'' \\
. & . & 1
\end{bmatrix},
\begin{bmatrix}
. \\ . \\ \lambda
\end{bmatrix}
\right).
\end{equation}
Then a minimal ALS for $f\inv$ 
of dimension $n-1$ is given by
\begin{equation}\label{eqn:wp.inv3b}
\als{A}'=\left(
\begin{bmatrix}
1 & . 
\end{bmatrix},
\begin{bmatrix}
-\lambda \perm b'' & -\perm B \perm \\
-\lambda b & - b'\perm
\end{bmatrix},
\begin{bmatrix}
. \\ 1
\end{bmatrix}
\right)
\end{equation}
with $1\notin R(\als{A}')$ and $1\notin L(\als{A}')$.
\end{lemma}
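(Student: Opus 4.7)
The plan is to obtain $\als{A}'$ by reducing the standard inverse $\als{A}\inv$ of Proposition~\ref{pro:inv.std} (dimension $n+1$) down to dimension $n-1$, exploiting two trivial equations that the block structure of $\als{A}$ creates, and then to verify minimality via the left and right families together with Proposition~\ref{pro:cohn94.47}.

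First I would compute $\perm A\perm$ under the given block decomposition,
$$\perm A\perm=\begin{bmatrix} 1 & . & . \\ \perm b'' & \perm B\perm & . \\ b & b'\perm & 1 \end{bmatrix},$$
so that with $\perm v=[\lambda,0,\ldots,0]\trp$ and $u\perm=[0,\ldots,0,1]$ the system matrix of $\als{A}\inv$ has first row $[\lambda,-1,0,\ldots,0]$ and last row $[0,\ldots,0,1]$, paired with right-hand side $e_{n+1}$. Since $\lambda=0$ would force $v=0$ and $f=0$, we have $\lambda\neq 0$, so the first equation yields $\tilde s_2=\lambda\tilde s_1$ and the last yields $\tilde s_{n+1}=1$. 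The admissible column operation adding $\lambda$ times column~$2$ to column~$1$ clears the first row to $[0,-1,0,\ldots,0]$, allowing deletion of row~$1$ and column~$2$; absorbing $\tilde s_{n+1}=1$ into the right-hand side lets me delete row~$n+1$ and column~$n+1$. A short block calculation shows what remains is precisely $\als{A}'$ with right-hand side $e_{n-1}$, and the first coordinate of the solution is still $f\inv$.

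For minimality I would compute the families of $\als{A}'$ from those of $\als{A}$. Using the solution $[f\inv,\,\perm_n s_f\,f\inv]\trp$ of $\als{A}\inv$ from Proposition~\ref{pro:inv.std} together with $s_{f,n}=\lambda$, the reduction yields $L(\als{A}')=\linsp\{1,s_{f,2},\ldots,s_{f,n-1}\}\cdot f\inv$. Solving $t_fA=e_1$ in block form gives $t_{f,1}=1$ and $t_{f,n}=b'B\inv b''-b$ (with $B\inv$ existing because $A$ invertible forces the triangular diagonal block $B$ invertible), and the analogous block solution of $\tau'A'=e_1$ produces $R(\als{A}')=\linsp\{1,t_{f,2},\ldots,t_{f,n-1}\}\cdot f\inv$; row~$1$ of $As_f=v$ gives $f=(b'B\inv b''-b)\lambda$, hence $t_{f,n}=f\lambda\inv$. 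Minimality of $\als{A}$ and Proposition~\ref{pro:cohn94.47} make $\{f,s_{f,2},\ldots,s_{f,n-1},\lambda\}$ and $\{1,t_{f,2},\ldots,t_{f,n-1},f\lambda\inv\}$ $\field{K}$-linearly independent; using $\lambda\neq 0$, dropping $f$ from the first and $f\lambda\inv$ from the second leaves independent sets of size $n-1$, and right-multiplication by the nonzero scalar $f\inv$ preserves independence, giving $\dim L(\als{A}')=\dim R(\als{A}')=n-1$ and thus minimality of $\als{A}'$.

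Finally, if $1\in L(\als{A}')$ then $f=1\cdot f$ would lie in $\linsp\{1,s_{f,2},\ldots,s_{f,n-1}\}$, contradicting the independence of $\{f,s_{f,2},\ldots,s_{f,n-1},1\}$; the symmetric argument using $t_{f,n}=f\lambda\inv$ excludes $1\in R(\als{A}')$. The main obstacle is the careful bookkeeping of the block reduction $\als{A}\inv\rightsquigarrow\als{A}'$ and the translation of the size-$n$ independence for $\als{A}$ into the required size-$(n-1)$ independence for $\als{A}'$ via the identities $s_{f,n}=\lambda$ and $t_{f,n}=f\lambda\inv$.
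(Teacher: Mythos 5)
Your proof is correct. The reduction is the same as the paper's: the paper adds row $n+1$ to row $n$ (your ``absorb $\tilde{s}_{n+1}=1$ into the right-hand side'') and adds $\lambda$ times column~$2$ to column~$1$, then deletes rows $1$, $n+1$ and columns $2$, $n+1$, arriving at exactly \eqref{eqn:wp.inv3b}. Where you genuinely diverge is the minimality step. The paper argues by contradiction without touching the families: a hypothetical ALS for $f\inv$ of dimension $m<n-1$ would, via the standard inverse of Proposition~\ref{pro:inv.std}, produce an ALS for $f$ of dimension $m+1<n$, contradicting minimality of $\als{A}$; the families are then computed only to establish $1\notin R(\als{A}')$ and $1\notin L(\als{A}')$. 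You instead identify $L(\als{A}')$ and $R(\als{A}')$ explicitly, transfer the $\field{K}$-linear independence from $\als{A}$ using $s_{f,n}=\lambda\neq 0$ and $t_{f,n}=f\lambda\inv$ (both identities check out), and invoke Proposition~\ref{pro:cohn94.47}. Your route is slightly longer but more informative --- it exhibits the families of $\als{A}'$ as $\linsp\{1,s_{f,2},\ldots,s_{f,n-1}\}f\inv$ and $\linsp\{1,t_{f,2},\ldots,t_{f,n-1}\}f\inv$, from which the two non-membership claims and minimality all drop out of one computation --- whereas the paper's contradiction argument is shorter and reusable verbatim for Lemma~\ref{lem:wp.inv0}. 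Two cosmetic remarks: $f\inv$ is not a scalar, but multiplication by any nonzero element of the free field on a fixed side is still a $\field{K}$-linear bijection (as $\field{K}$ is central), so your independence transfer is sound; and the factor $f\inv$ in the right family actually sits on the left, $f\inv\linsp\{1,t_{f,2},\ldots,t_{f,n-1}\}$, though the paper makes the same notational choice as you and the argument is unaffected either way.
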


\begin{proof}
The standard inverse of $\als{A}$ is
\begin{displaymath}
\begin{bmatrix}
\lambda & -1 & . & . \\
. & -\perm b'' & -\perm B \perm & . \\
. & -b & -b'\perm & -1 \\
. & . & . & 1 
\end{bmatrix}
\tilde{s} = 
\begin{bmatrix}
. \\ . \\ . \\ 1
\end{bmatrix}.
\end{displaymath}
Adding row 4 to row 3 and $\lambda$-times column 2 to column 1 gives
\begin{displaymath}
\begin{bmatrix}
0 & -1 & . & . \\
-\lambda \perm b'' & -\perm b'' & -\perm B \perm & . \\
-\lambda b & -b & -b'\perm & 0 \\
. & . & . & 1 
\end{bmatrix}
s' = 
\begin{bmatrix}
. \\ . \\ 1 \\ 1
\end{bmatrix}.
\end{displaymath}
It follows that $s_2'=0$ and $s_{n+1}'$ does not contribute to the
solution $s_1'$ and thus the first and the last row as well as
the second and the last column can be removed.
If $\als{A}'$ were not minimal, then there
would exist a system $\als{A}''$ of dimension $m < n-1$ for $f\inv$.
The standard inverse $(\als{A}'')\inv$ would
give a system of dimension $m+1 < n$ for $f$,
contradicting minimality of $\als{A}$.
It remains to show that $1\notin R(\als{A}')$ and
$1\notin L(\als{A}')$.
Let $t = (t_1, t_2, \ldots, t_n)$ be the right family of $\als{A}$
which is (due to minimality) $\field{K}$-linearly independent.
Then the right family of $\als{A}\inv$ is
$(f\inv t_n, \ldots, f\inv t_2, f\inv t_1, f\inv)$,
that after the row operation becomes $f\inv (t_n, \ldots, t_2, t_1, 1-t_1)$.
Removing the first and the last component (corresponding to
the first and the last row) yields the right family
$f\inv (t_{n-1}, \ldots, t_2, t_1)$. Therefore $1\notin R(\als{A}')$,
because otherwise $f \in \linsp \{ t_{n-1}, \ldots, t_2, t_1 \}$,
contradicting $\field{K}$-linear independence of $t$.
Similar arguments show that $1\notin L(\als{A}')$.
\end{proof}

\begin{lemma}[Inverse Type~$(1,0)$]\label{lem:wp.inv2}
Assume that
$0 \neq f \in \freeFLD{\field{K}}{X}$
has a \emph{minimal} admissible linear system of dimension $n$
of the form
\begin{equation}\label{eqn:wp.inv2a}
\als{A} = \left(
\begin{bmatrix}
1 & . & .
\end{bmatrix},
\begin{bmatrix}
1 & b' & b \\
. & B & b'' \\
. & c' & c
\end{bmatrix},
\begin{bmatrix}
. \\ . \\ \lambda
\end{bmatrix}
\right)
\end{equation}
with $1 \not\in L(\als{A})$.
Then a minimal ALS for $f\inv$
of dimension $n$ is given by
\begin{equation}\label{eqn:wp.inv2b}
\als{A}' = \left(
\begin{bmatrix}
1 & . & .
\end{bmatrix},
\begin{bmatrix}
1 & - \frac{1}{\lambda} c & - \frac{1}{\lambda} c' \perm \\
. & -\perm b'' & -\perm B \perm \\
. & -b & -b'\perm 
\end{bmatrix},
\begin{bmatrix}
. \\ . \\ 1
\end{bmatrix}
\right)
\end{equation}
with $1\notin L(\als{A}')$.
\end{lemma}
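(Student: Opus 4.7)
The plan is to mirror Lemma~\ref{lem:wp.inv3}: apply admissible elementary transformations to the standard inverse $\als{A}\inv$ of dimension $n+1$ (Proposition~\ref{pro:inv.std}) to shrink it to dimension $n$, verify that the result is exactly $\als{A}'$, then check minimality via Proposition~\ref{pro:cohn94.47} and the property $1\notin L(\als{A}')$ by direct computation of the left and right families. Writing out $\als{A}\inv$ block-wise with $\perm A\perm = \bigl[\begin{smallmatrix} c & c'\perm & . \\ \perm b'' & \perm B \perm & . \\ b & b'\perm & 1 \end{smallmatrix}\bigr]$ (the bottom-right $1$ coming from $A_{:,1}=e_1$), one obtains a system matrix with $\lambda$ at position $(1,1)$, a $-1$ at $(n,n+1)$, and right-hand side $e_{n+1}$. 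Adding row $n+1$ to row $n$ (admissible, with $P = I + e_n e_{n+1}\trp$) kills the $-1$ at $(n,n+1)$ and decouples the last row and column with $s\inv_{n+1}=1$; dropping them and scaling row~$1$ by $1/\lambda$ (legitimate since $\lambda\neq 0$ because $f\neq 0$, and since $v_1=0$) produces precisely $\als{A}'$.

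Tracking the solution vector $s\inv = [f\inv;\,\perm_n s_f\,f\inv]$ and the right family $[f\inv t_f\perm_n,\,f\inv]$ through these transformations --- using that $A_{:,1}=e_1$ forces $(s_f)_1=f$ and $(t_f)_1=1$, so that the $(n+1)$-st components become $1$ and $0$ respectively --- one obtains the left family
\begin{displaymath}
s' = \bigl(f\inv,\,(s_f)_n f\inv,\,\ldots,\,(s_f)_2 f\inv\bigr)\trp
\end{displaymath}
and right family
\begin{displaymath}
t' = \bigl(\lambda f\inv(t_f)_n,\,f\inv(t_f)_{n-1},\,\ldots,\,f\inv(t_f)_1\bigr).
\end{displaymath}
For linear independence of $s'$, right-multiplying a vanishing $\field{K}$-combination by $f$ reduces it to $\alpha_1\cdot 1 + \sum_{k=2}^n \alpha_k(s_f)_{n-k+2} = 0$; since $\linsp\{(s_f)_2,\ldots,(s_f)_n\}\subseteq L(\als{A})$ and $1\notin L(\als{A})$, the set $\{1,(s_f)_2,\ldots,(s_f)_n\}$ is $\field{K}$-linearly independent, forcing all $\alpha_k=0$. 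For $t'$, left-multiplying by $f$ and invoking $\field{K}$-linear independence of $t_f$ together with $\lambda\neq 0$ gives the analogous conclusion. By Proposition~\ref{pro:cohn94.47}, $\als{A}'$ is minimal.

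The property $1\notin L(\als{A}')$ falls out of the same right-multiplication by $f$: from $1 = \alpha_1 f\inv + \sum_{k\geq 2}\alpha_k(s_f)_{n-k+2} f\inv$ one gets $\alpha_1 = f - \sum_{k\geq 2}\alpha_k(s_f)_{n-k+2} \in L(\als{A})$, so $\alpha_1=0$ (else $1\in L(\als{A})$), and then $f\in\linsp\{(s_f)_2,\ldots,(s_f)_n\}$ contradicts $\field{K}$-linear independence of $s_f$. The main obstacle is the bookkeeping --- tracking the permutation reversals and the two admissible operations consistently through both families --- rather than anything conceptually subtle; the substantive role of the hypothesis $1\notin L(\als{A})$ is precisely to keep the ``new'' element $1$ (freed up by dropping the last component $(s_f)_1 f\inv = 1$ when passing from the standard inverse of dimension $n+1$ to $\als{A}'$ of dimension $n$) independent from the rest of the left family.
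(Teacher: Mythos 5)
Your proposal is correct and follows essentially the same route as the paper's proof: form the standard inverse, add row $n+1$ to row $n$, delete the decoupled last row and column, rescale the first row by $1/\lambda$, and then track the left and right families through these operations, using $1\notin L(\als{A})$ to get $\field{K}$-linear independence of the new left family and Proposition~\ref{pro:cohn94.47} for minimality. Your bookkeeping is in fact slightly more careful than the paper's (the $\lambda$ factor and the side on which $f\inv$ multiplies the right family), but these details do not affect the linear-independence arguments.
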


\begin{proof}
The standard inverse of $\als{A}$ is
\begin{displaymath}
\begin{bmatrix}
\lambda & -c & -c' \perm & . \\
. & -\perm b'' & -\perm B \perm & . \\
. & -b & -b'\perm & -1 \\
. & . & . & 1 
\end{bmatrix}
\tilde{s} = 
\begin{bmatrix}
. \\ . \\ . \\ 1
\end{bmatrix}
\end{displaymath}
and has dimension $n+1$.
After adding row~$n+1$ to row~$n$ we can remove row and column~$n+1$
because $\tilde{s}_{n+1}$ does not contribute to the solution $\tilde{s}_1 = f\inv$.
Then we divide the first row by $\lambda$
and obtain \eqref{eqn:wp.inv2b}.
It remains to show that $\als{A}'$ is minimal
and $1\notin L(\als{A}')$.
Let $(s_1, s_2, \ldots, s_n)$ be the left family of $\als{A}$
which is (due to minimality) $\field{K}$-linearly independent.
Then the left family of $\als{A}\inv$ is
$(f\inv, s_n f\inv, \ldots, s_2 f\inv, 1)$.
Note that (admissible) row operations do not affect the
left family.
Since we removed the last entry $s_1 f\inv = \tilde{s}_{n+1}=1$, the left family
of $\als{A}'$ is
$(1, s_n, \ldots, s_2) f\inv$. By assumption $1 \notin L(\als{A})$.
Therefore $1 \notin \linsp \{ s_2, s_3, \ldots, s_n \}$,
hence $(1,s_n, \ldots, s_2)$ is $\field{K}$-linearly independent.
Clearly, $1\notin L(\als{A}')$ because
$f \notin \linsp \{ 1, s_n, \ldots, s_2 \}$.
Similarly, let $(t_1, t_2, \ldots, t_n)$ be the right family
of $\als{A}$ which is $\field{K}$-linearly independent.
Then the right family of $\als{A}\inv$ is
$(f\inv t_n, \ldots, f\inv t_2, f\inv t_1, f\inv)$,
that after the row operation is
$(f\inv t_n, \ldots, f\inv t_2, f\inv t_1, f\inv -f\inv t_1 )$.
Since we removed the last entry, the right family of $\als{A}'$
is $f\inv (t_n, \ldots, t_2, t_1)$ which is clearly
$\field{K}$-linearly independent.
Proposition~\ref{pro:cohn94.47} gives minimality of $\als{A}'$.
\end{proof}

\begin{lemma}[Inverse Type~$(0,1)$]\label{lem:wp.inv1}
Assume that $0 \neq f \in \freeFLD{\field{K}}{X}$ 
has a \emph{minimal} admissible linear system of dimension $n$
of the form
\begin{equation}\label{eqn:wp.inv1a}
\als{A} = \left(
\begin{bmatrix}
1 & . & . 
\end{bmatrix},
\begin{bmatrix}
a & b' & b \\
a' & B & b'' \\
. & . & 1
\end{bmatrix},
\begin{bmatrix}
. \\ . \\ \lambda
\end{bmatrix}
\right)
\end{equation}
with $1 \not\in R(\als{A})$.
Then a minimal ALS for $f\inv$
of dimension $n$ is given by
\begin{equation}\label{eqn:wp.inv1b}
\als{A}' = \left(
\begin{bmatrix}
1 & . & . 
\end{bmatrix},
\begin{bmatrix}
-\lambda \perm b'' & -\perm B \perm & -\perm a'. \\
-\lambda b & -b'\perm & -a \\
. & . & 1 
\end{bmatrix},
\begin{bmatrix}
. \\ . \\ 1
\end{bmatrix}
\right)
\end{equation}
with $1\notin R(\als{A'})$.
\end{lemma}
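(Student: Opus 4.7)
The approach mirrors Lemma~\ref{lem:wp.inv2} by exploiting the symmetry between left and right families: the row operation used there is replaced by a column operation here, and the roles of $L$ and $R$ are swapped. Correspondingly, the hypothesis $1\notin R(\als{A})$ in~\eqref{eqn:wp.inv1a} plays the dual role of the hypothesis $1\notin L(\als{A})$ in Lemma~\ref{lem:wp.inv2}.

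First I would form the standard inverse $\als{A}\inv$ via Proposition~\ref{pro:inv.std}. Because the last row of $A$ is $[0,0,1]$ and $v=[0,0,\lambda]\trp$ in the block form of~\eqref{eqn:wp.inv1a}, the system matrix of $\als{A}\inv$ has $\lambda$ in position $(1,1)$ and $-1$ in position $(1,2)$, with $-\perm A\perm$ and $u\perm$ occupying the remaining blocks. A single admissible column operation (add $\lambda$ times column~$2$ to column~$1$) then zeroes the $(1,1)$-entry and fills column~$1$ with $[0,\,-\lambda\perm b'',\,-\lambda b,\,0]\trp$, while row~$1$ becomes $[0,-1,0,\ldots,0]$. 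The corresponding change in the solution vector is $\tilde s_2 \to \tilde s_2-\lambda\tilde s_1 = \lambda f\inv-\lambda f\inv = 0$ (since $s_n=\lambda$ forces $\tilde s_2 = s_n f\inv$ before the operation), so row~$1$ (which now only encodes $\tilde s_2=0$) and column~$2$ (whose contribution is killed by $\tilde s_2=0$) can both be dropped. The resulting $n$-dimensional system is exactly~\eqref{eqn:wp.inv1b}.

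Minimality of $\als{A}'$ then follows from Proposition~\ref{pro:cohn94.47} once the two families are identified. The left family comes out to $(f\inv,\, s_{n-1}f\inv,\, \ldots,\, s_2 f\inv,\, 1)$, and its $\field{K}$-linear independence follows from that of $(s_1,\ldots,s_n)$, using $s_n=\lambda$ (a nonzero scalar) to rewrite any constant term in a putative dependence as a scalar multiple of $s_n$. The right family takes the form $(t_{n-1},\ldots,t_1,1)\cdot(\lambda t_n)\inv$, where $(t_1,\ldots,t_n)$ is the right family of $\als{A}$; here the hypothesis $1\notin R(\als{A})$ is exactly what is needed, because any nontrivial dependence with nonzero coefficient on the rightmost entry would place $1$ in $\linsp\{t_1,\ldots,t_{n-1}\}\subseteq R(\als{A})$, contradicting the hypothesis.

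Finally, to verify $1\notin R(\als{A}')$ I would suppose $1=\sum_i\alpha_i t_i'$, clear the rightmost factor by multiplying on the right by $\lambda t_n$, and split into the cases $\alpha_n\ne 0$ and $\alpha_n=0$: the former places $1$ in $R(\als{A})$, the latter forces $t_n\in\linsp\{t_1,\ldots,t_{n-1}\}$, contradicting the original independence of the right family of $\als{A}$. The main bookkeeping obstacle is tracking the permutation matrices $\perm$ and keeping straight how the components of $u_{\als{A}\inv}$, $v_{\als{A}\inv}$ and the solution vector are relabelled after row~$1$ and column~$2$ are discarded; once this is done, all substantive content reduces to the same left/right symmetry that underlies Lemma~\ref{lem:wp.inv2}.
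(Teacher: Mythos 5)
Your proposal is correct and follows the paper's own route: form the standard inverse, add $\lambda$ times column~2 to column~1, delete row~1 and column~2 (justified by $\tilde s_2=0$), and then establish minimality and $1\notin R(\als{A}')$ via Proposition~\ref{pro:cohn94.47} by tracking the left and right families, exactly as the paper does by deferring to the dual argument of Lemma~\ref{lem:wp.inv2}. You have merely written out the details that the paper leaves to that analogy, including the correct observation that the hypothesis $1\notin R(\als{A})$ enters only through the right family.
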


\begin{proof}
The standard inverse of $\als{A}$ is
\begin{displaymath}
\begin{bmatrix}
\lambda & -1 & . & . \\
. & -\perm b'' & -\perm B \perm  & -\perm a' \\
. & -b & -b'\perm & -a \\
. & . & . & 1 
\end{bmatrix}
\tilde{s} = 
\begin{bmatrix}
. \\ . \\ . \\ 1
\end{bmatrix}.
\end{displaymath}
After adding $\lambda$-times column~2 to column~1 we can
remove row~1 and column~2, because $\tilde{s}_2=0$.
Showing minimality and $1\notin R(\als{A}')$ is similar
to the proof of Lemma~\ref{lem:wp.inv2}
(column operations affect the left family).
\end{proof}

\begin{lemma}[Inverse Type~$(0,0)$]\label{lem:wp.inv0}
Let $\als{A} = (u,A,v)$ be a minimal admissible linear system
of dimension $n$ for $0 \neq f \in \freeFLD{\field{K}}{X}$
with $1\notin R(\als{A})$ and $1\notin L(\als{A})$.
Then the standard inverse $\als{A}\inv$ is a minimal ALS
of dimension $n+1$ for $f\inv$.
\end{lemma}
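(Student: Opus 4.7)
My plan is to invoke Proposition~\ref{pro:cohn94.47} and establish minimality of $\als{A}\inv$ by showing that both its left family and its right family are $\field{K}$-linearly independent; the dimension $n+1$ is built into the construction of Proposition~\ref{pro:inv.std}.

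For the \emph{left} family, I would read off the solution vector of $\als{A}\inv$ from the proof of Proposition~\ref{pro:inv.std}: it is $[f\inv,\,\perm s f\inv]\trp$, where $s = A\inv v$ is the left family of $\als{A}$. Since $s_1 = f$ by admissibility, the entries are $f\inv,\, s_n f\inv,\, s_{n-1} f\inv,\, \ldots,\, s_2 f\inv,\, 1$. Any hypothetical $\field{K}$-linear relation among these elements, right-multiplied by $f$, becomes a $\field{K}$-linear relation among $1, s_1, s_2, \ldots, s_n$. Minimality of $\als{A}$ forces $s_1, \ldots, s_n$ to be $\field{K}$-linearly independent, and the assumption $1 \notin L(\als{A})$ precisely prevents $1$ from lying in their span, so all coefficients must vanish.

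For the \emph{right} family $t' = e_1 (A_{\als{A}\inv})\inv$, I would exploit the block form of $A_{\als{A}\inv}$ and solve $t' A_{\als{A}\inv} = e_1$ by writing $t' = [p,\,q]$ with $p$ a row of length $n$ and $q$ a single entry, both over the free field. The resulting equations $p \perm A = q u$ and $p \perm v = 1$ unwind, using $u A\inv v = f$, to yield $q = f\inv$ and $p = f\inv t \perm$, where $t = u A\inv$ is the right family of $\als{A}$. Hence the right family of $\als{A}\inv$ is $f\inv t_n,\, f\inv t_{n-1},\,\ldots,\, f\inv t_1,\, f\inv$. A symmetric argument---left-multiplying any $\field{K}$-linear relation by $f$---reduces the question to independence of $1, t_1, \ldots, t_n$, which is guaranteed by minimality of $\als{A}$ together with the hypothesis $1 \notin R(\als{A})$.

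The only step requiring real care is the right-family computation, since the reversal permutation $\perm$ in the standard inverse must be tracked on both sides of $A$ and combined with the identity $t v = f$ to extract $q = f\inv$. Once that is done, both linear-independence arguments are entirely parallel: right-multiply (resp.\ left-multiply) by $f$ to kill the $f\inv$ factor, then invoke minimality of $\als{A}$ plus the relevant hypothesis $1 \notin L(\als{A})$ or $1 \notin R(\als{A})$.
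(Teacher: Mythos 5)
Your proof is correct, but it takes a genuinely different route from the paper's. The paper argues by contradiction through Lemma~\ref{lem:wp.inv3}: if $\als{A}\inv$ were not minimal, there would be an ALS for $f\inv$ of dimension $m<n+1$, and inverting \emph{that} system via the Type~$(1,1)$ construction would yield an ALS for $f$ of dimension $m-1<n$, contradicting minimality of $\als{A}$. You instead verify the Cohn--Reutenauer criterion (Proposition~\ref{pro:cohn94.47}) directly, computing the left family $(f\inv,\,s_nf\inv,\ldots,s_2f\inv,\,1)$ and the right family $(f\inv t_n,\ldots,f\inv t_1,\,f\inv)$ of the standard inverse and checking $\field{K}$-linear independence of each. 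Your block computation of the right family is correct --- and it is indeed the delicate step; note it places the factor $f\inv$ on the \emph{left} of the $t_i$, whereas the paper writes $t_if\inv$ in the proof of Lemma~\ref{lem:wp.inv3}, a discrepancy that is immaterial for $\field{K}$-independence. What your approach buys is transparency: it exhibits exactly where the hypotheses $1\notin L(\als{A})$ and $1\notin R(\als{A})$ enter (they are precisely what upgrades independence of $s_1,\ldots,s_n$ to independence of $1,s_1,\ldots,s_n$, and likewise for the $t_i$), whereas the paper's two-line argument never mentions them and implicitly relies on the hypothetical smaller system being transformable into the form \eqref{eqn:wp.inv3a} required by Lemma~\ref{lem:wp.inv3} (via Lemmas~\ref{lem:wp.for1} and~\ref{lem:wp.for2}). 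The paper's route is shorter and reuses earlier lemmas; yours is self-contained modulo Proposition~\ref{pro:cohn94.47} and arguably closes a gap in the terser argument.
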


\begin{proof}
If $\als{A}\inv$ were not minimal, then there
would be a system $\als{A}'$ of dimension $m < n+1$ for $f\inv$.
Applying Lemma~\ref{lem:wp.inv3} (Inverse Type~$(1,1)$), we would get
a system of dimension $m-1 < n$ for $f$,
contradicting minimality of $\als{A}$.
\end{proof}

\begin{Example}
Taking the minimal ALS (for the anticommutator) from Example~\ref{ex:anticomm},
we get, by Lemma~\ref{lem:wp.inv3} (Inverse Type~$(1,1)$),
a \emph{minimal} ALS for $(xy+yx)\inv$:
\begin{displaymath}
\als{A}'=
\left(
\begin{bmatrix}
1 & . & . 
\end{bmatrix},
\begin{bmatrix}
x & - 1 & . \\
y & . & -1 \\
. & y & x 
\end{bmatrix},
\begin{bmatrix}
. \\ . \\ 1
\end{bmatrix}
\right).
\end{displaymath}
Lemma~\ref{lem:wp.inv0} (Inverse Type~$(0,0)$) gives again a minimal system
for $xy+yx$.
\end{Example}

Since it is possible to construct minimal linear representations
for regular elements (see Section~\ref{sec:wp.reg}), this is in particular
true for polynomials.
These are of the form \eqref{eqn:wp.inv3a},
since $A = I-Q$ with nilpotent $Q$,
which can be choosen upper triangular. This can be seen either by
looking at proper linear systems (Section~\ref{sec:wp.reg})
where admissible transformations
are conjugations (of the system matrix such that the first component
of the solution vector is left untouched) or by the following proposition.

\begin{proposition}[%
\protect{\cite[Proposition 2.1]{Cohn1999a}
}]\label{thr:wp.cohn99.21}
Let $f\in \freeFLD{\field{K}}{X}$.

(i) $f$ is a power series if and only if
in any minimal representation, the constant term of its system matrix
is invertible. There is then a minimal representation which is unital,
that is, $A_0 = I$.

(ii) $f$ is a polynomial if and only if
in any unital minimal representation, the matrix 
$A = A_1 \otimes x_1 + \ldots + A_d \otimes x_d$ is nilpotent.
There is then a minimal representation with a unitriangular
(ones on and zeros below the diagonal) system matrix.
\end{proposition}

\begin{Example}
The element $xyz\inv$ admits the following \emph{minimal} ALS
\begin{displaymath}
\als{A} = \left(
\begin{bmatrix}
1 & . & . 
\end{bmatrix},
\begin{bmatrix}
1 & -x & . \\
. & 1 & -y \\
. & . & z
\end{bmatrix},
\begin{bmatrix}
. \\ . \\ 1
\end{bmatrix}
\right)
\end{displaymath}
with right family $t = (1, x, xyz\inv)$
and left family $s = (xyz\inv, yz\inv, z\inv)$. Now
Lemma~\ref{lem:wp.inv2} (Inverse Type~$(1,0)$) can be applied to get
the minimal ALS
\begin{displaymath}
\als{A}' = \left(
\begin{bmatrix}
1 & . & . 
\end{bmatrix},
\begin{bmatrix}
1 & -z & . \\
. & y & -1 \\
. & . & x
\end{bmatrix},
\begin{bmatrix}
. \\ . \\ 1
\end{bmatrix}
\right)
\end{displaymath}
for $zy\inv x\inv$. Note that we can apply Lemma~\ref{lem:wp.inv2}
again, because $1\notin L(\als{A}')$.
\end{Example}

If an admissible linear system $\als{A}$ is of the form \eqref{eqn:wp.inv1a}
in Lemma~\ref{lem:wp.inv1} (Inverse Type~$(0,1)$), then
it follows immediately that $1 \in L(\als{A})$. Conversely,
assuming $1\in L(\als{A})$,
the proof of the existence of an admissible transformation $(P,Q)$
such that $P\als{A}Q$ is of the form \eqref{eqn:wp.inv1a} 
is a bit more involved and requires minimality.

\begin{lemma}[for Inverse Type~$(0,1)$]\label{lem:wp.for1}
Let $\als{A} = (u,A,v)$ be a \emph{minimal} admissible linear system
with $\dim \als{A} = n \ge 2$ and $1\in L(\als{A})$. Then there
exists an admissible transformation $(P,Q)$ such that
$(uQ, PAQ, Pv)$ is of the form \eqref{eqn:wp.inv1a}.
\end{lemma}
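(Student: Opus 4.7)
The plan is to construct the admissible transformation $(P,Q)$ in two stages. Stage~1 uses $Q$ (an admissible change of basis of the left family) to make the new last solution component the scalar~$1$; Stage~2 uses $P$ (row operations over $\field{K}$) to make the identity $s'_n=1$ visible as the last row $e_n^\top$ of the system matrix with matching right-hand side, yielding the form~\eqref{eqn:wp.inv1a}.

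For Stage~1, by Proposition~\ref{pro:cohn94.47} the left family $s = A^{-1}v$ is $\field{K}$-linearly independent, so the hypothesis $1\in L(\als{A})$ yields a \emph{unique} row vector $\alpha = (\alpha_1,\ldots,\alpha_n)\in\field{K}^{1\times n}$ with $\alpha s = 1$. Since $\als{A}$ is minimal and $n\ge 2$, the element $f = s_1$ is not a scalar (otherwise $f$ would admit a one-dimensional minimal ALS, contradicting $n\ge 2$), so $\alpha$ is not proportional to $e_1$ and $(\alpha_2,\ldots,\alpha_n)\ne 0$. Pick any invertible $R\in\field{K}^{n\times n}$ with first row $e_1$ and last row $\alpha$ (possible since these rows are linearly independent), and set $Q := R^{-1}$. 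The first row of $Q$ is again $e_1$, so $(I,Q)$ is admissible, and the transformed solution $s' = Rs$ satisfies $s'_1 = f$ and $s'_n = \alpha s = 1$.

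For Stage~2, write $\tilde{A} := AQ$ and look for an invertible $P\in\field{K}^{n\times n}$ whose last row $p^\top$ satisfies $p^\top\tilde{A} = e_n^\top$ and $p^\top v = \lambda$ for some $\lambda\ne 0$, while the other $n-1$ rows of $P$ annihilate $v$. Decomposing $\tilde{A} = \tilde{A}_0\otimes 1 + \sum_k \tilde{A}_k\otimes x_k$, the first condition on $p^\top$ splits into the finite scalar linear system $p^\top\tilde{A}_0 = e_n^\top$ together with $p^\top\tilde{A}_k = 0$ for $k\ge 1$. Since $v\ne 0$, its left annihilator in $\field{K}^{1\times n}$ has dimension $n-1$, so once $p^\top$ is known we can freely extend it to an invertible $P$; then $Pv = (0,\ldots,0,\lambda)^\top$ and $P\tilde{A}$ has last row $e_n^\top$, placing the system in the form~\eqref{eqn:wp.inv1a}.

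The crux is thus the existence of $p^\top$, and my plan is to derive it from Theorem~\ref{thr:cohn99.14}: it suffices to produce \emph{any} minimal ALS $\pi^\star = (e_1,A^\star,v^\star)$ for $f$ in the target form~\eqref{eqn:wp.inv1a}. Granted such a $\pi^\star$, Cohn's theorem yields invertible $\bar{P},\bar{Q}\in\field{K}^{n\times n}$ with $\pi^\star = (e_1\bar{Q},\bar{P}A\bar{Q},\bar{P}v)$; the identity $e_1 = e_1\bar{Q}$ (forced by $u = u^\star = e_1$) makes $(\bar{P},\bar{Q})$ admissible and supplies the sought transformation directly. To construct $\pi^\star$, after an admissible permutation of the middle coordinates making $\alpha_n\ne 0$ I substitute $s_n\leftarrow \alpha_n^{-1}(1-\sum_{j<n}\alpha_j s_j)$ into the system $As=v$ and re-interpret the result as a system in the new variables $(s_1,\ldots,s_{n-1},1)$, then adjoin the explicit relation $s^\star_n = 1$ as the last row $e_n^\top$ with right-hand side $1$, replacing the (now redundant) $n$-th original equation. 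The left family of the resulting system is the chosen $\field{K}$-linearly independent basis $(s_1,\ldots,s_{n-1},1)$ of $L(\als{A})$, so minimality of $\pi^\star$ reduces, by Proposition~\ref{pro:cohn94.47}, to $\field{K}$-linear independence of its right family. The hard part will be verifying rigorously that this adjunction produces a \emph{full} linear pencil (i.e.\ $A^\star$ is invertible over the free field) with the required right-family independence; once that is in place, Theorem~\ref{thr:cohn99.14} and the two preceding stages assemble into the proof.
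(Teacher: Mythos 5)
Your Stage~1 is sound and coincides with the paper's opening move: since the left family is $\field{K}$-linearly independent (Proposition~\ref{pro:cohn94.47}) and $1\in L(\als{A})$, an admissible $Q$ makes the last solution component equal to $1$ (and your observation that $R$ having first row $e_1$ forces $R\inv$ to have first row $e_1$ is correct). Reducing Stage~2 to finding a single row $p\trp$ over $\field{K}$ with $p\trp AQ = e_n\trp$ is also the right target, and $p\trp v = e_n\trp (Q\inv s) = s'_n = 1\neq 0$ then comes for free. But the proposal has a genuine gap exactly at its crux: you never establish that such a $p\trp$ exists. Your proposed route --- build a minimal ALS $\pi^\star$ in the form~\eqref{eqn:wp.inv1a} by replacing the $n$-th equation with $s^\star_n=1$ and then invoke Theorem~\ref{thr:cohn99.14} --- does not close this gap, because the two properties you defer (fullness of the pencil $A^\star$ and independence of its right family) are not side conditions but the entire content of the lemma. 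Concretely, $A^\star$ is full if and only if the upper-left $(n-1)\times(n-1)$ block of $AQ$ is full, and a full linear matrix can perfectly well have a non-full corner block, so nothing yet forces this; if it fails, no admissible transformation to the form~\eqref{eqn:wp.inv1a} exists and the appeal to Theorem~\ref{thr:cohn99.14} never gets off the ground. Calling the $n$-th equation ``redundant'' presupposes what is to be shown.

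The paper fills precisely this hole with a non-fullness/hollowness argument: after normalizing $v=[0,\ldots,0,1]\trp$ and $s=(\bar s,1)$, the modified pencil $\bigl[\begin{smallmatrix}\bar A & b\\ c & d-1\end{smallmatrix}\bigr]$ annihilates the nonzero vector $(\bar s,1)$ and hence is not full; by Lemma~\ref{lem:cohn95.636} it is associated over $\field{K}$ to a hollow matrix, and every hollow configuration except a zero \emph{last row} is excluded because it would force some components of the transformed left family to vanish, contradicting their $\field{K}$-linear independence. The zero last row delivers exactly your missing row vector (namely $p\trp=[T,1]$ with $T\bar A + c = 0$ and $Tb+d=1$). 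To repair your write-up you would need to supply an argument of this strength; as it stands the proposal is an outline whose hard step is acknowledged but not performed.
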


\begin{proof}
Without loss of generality, assume that $v = [0,\ldots,0,1]\trp$ and
the left family $s = A\inv v$ is $(s_1, s_2,$ $\ldots, s_{n-1}, 1)$.
Otherwise it can be brought to this form by some admissible transformation
$(P^\circ, Q^\circ)$.
Now let $\bar{A}$ denote the upper left $(n-1)\times (n-1)$ block of $A$,
let $\bar{s} = (s_1, \ldots, s_{n-1})$ and write $As = v$ as
\begin{displaymath}
\begin{bmatrix}
\bar{A} & b \\
c & d
\end{bmatrix}
\begin{bmatrix}
\bar{s} \\ 1
\end{bmatrix}
=
\begin{bmatrix}
0 \\ 1
\end{bmatrix}.
\end{displaymath}
This system is equivalent to
\begin{displaymath}
\begin{bmatrix}
\bar{A} & b \\
c & d-1
\end{bmatrix}
\begin{bmatrix}
\bar{s} \\ 1
\end{bmatrix}
=
\begin{bmatrix}
0 \\ 0
\end{bmatrix}.
\end{displaymath}
Since the left family is $\field{K}$-linearly independent (by minimality of $\als{A}$),
the matrix
$\tilde{A} = \bigl[\begin{smallmatrix} \bar{A} & b \\ c & d-1 \end{smallmatrix}\bigr]$
cannot be full. We claim that there is only one possibility to transform $\tilde{A}$
to a hollow matrix, namely with zero last row. If we cannot produce a
$(n-i) \times i$ block of zeros (by invertible transformations) in the
first $n-1$ rows of $\tilde{A}$, then we cannot get 
blocks of zeros of size $(n-i+1) \times i$ and we are done.

Now assume that there are invertible matrices
$P' \in \field{K}^{(n-1) \times (n-1)}$
and (admissible) $Q\in\field{K}^{n \times n}$
with $(Q\inv s)_1 = s_1$,
such that $P'[ \bar{A} , b]Q$ contains a zero block of size $(n-i) \times i$
for some $i=1,\ldots, n-1$. There are two cases. If the first $n-i$
entries in column~1 cannot be made zero, we construct an upper right
zero block:
\begin{displaymath}
\hat{A} = 
\begin{bmatrix}
A_{11} & . \\
A_{21} & A_{22}
\end{bmatrix},
\quad \hat{s} = Q\inv s
\quad\text{and}\quad \hat{v} = P v = v
\end{displaymath}
where $A_{11}$ has size $(n-i) \times (n-i)$.
If $A_{11}$ were not full,
then $A$ would not be full (the last row is not involved
in the transformation). Hence this pivot block is
invertible over the free field. 
Therefore $\hat{s}_1 = \hat{s}_2 = \ldots = \hat{s}_{n-i} = 0$.
Otherwise we construct an upper left zero block in $PAQ$.
But then 
$\hat{s}_{i+1} = \hat{s}_{i+2} = \ldots = \hat{s}_{n} = 0$.
Both contradict $\field{K}$-linear independence of the left family.

So there is only one block left, which can make $\tilde{A}$ non-full.
Hence, by Lemma~\ref{lem:cohn95.636},
the modified (system) matrix
$\tilde{A}$
is associated over $\field{K}$ to a linear hollow matrix with
a $1 \times n$ block of zeros, say in the last row (the columns
and the first $n-1$ rows are left unouched):
\begin{displaymath}
\begin{bmatrix}
I_{n-1} & . \\
T & 1
\end{bmatrix}
\begin{bmatrix}
\bar{A} & b \\
c & d-1 
\end{bmatrix}
I_n
=
\begin{bmatrix}
\bar{A} & b \\
0 & 0 
\end{bmatrix}.
\end{displaymath}
Hence we have $T\bar{A} + c = 0$ and therefore
\begin{displaymath}
\begin{bmatrix}
\bar{A} & b \\
0 & Tb + d 
\end{bmatrix}
\begin{bmatrix}
\bar{s} \\ 1
\end{bmatrix}
=
\begin{bmatrix}
0 \\ 1
\end{bmatrix}.
\end{displaymath}
Clearly $Tb + d = 1$. The transformation
\begin{displaymath}
(P,Q) = \left(
\begin{bmatrix}
I_{n-1} & . \\
T & 1 
\end{bmatrix}
P^\circ, Q^\circ \right)
\end{displaymath}
does the job.
\end{proof}

\begin{lemma}[for Inverse Type~$(1,0)$]\label{lem:wp.for2}
Let $\als{A} = (u,A,v)$ be a \emph{minimal} admissible linear system
with $\dim \als{A} = n \ge 2$ and $1\in R(\als{A})$. Then there
exists an admissible transformation $(P,Q)$ such that
$(uQ, PAQ, Pv)$ is of the form \eqref{eqn:wp.inv2a}.
\end{lemma}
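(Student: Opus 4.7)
The plan is to mirror Lemma~\ref{lem:wp.for1}, with the roles of rows and columns (and of the left and right families) exchanged throughout. First, using $1 \in R(\als{A})$, apply an admissible transformation to bring $\als{A}$ to a form in which $u = e_1$, the right family $t = uA\inv$ satisfies $t_1 = 1$, and $v = [0,\ldots,0,\lambda]\trp$ with $\lambda \ne 0$. The normalization of $t_1$ is achieved by an admissible $(P^\circ, Q^\circ)$ with $Q^\circ = I$ and the first column of $(P^\circ)\inv$ chosen to exhibit $1$ as a $\field{K}$-linear combination of the $t_i$; the normalization of $v$ is then done by a further restricted transformation with $P$ of first column $e_1$ and admissible $Q$. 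Minimality together with $n \ge 2$ rules out $v \in \field{K} e_1$, which would force $f$ to be a scalar.

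Write the system matrix in block form $A = \bigl[\begin{smallmatrix} a & b \\ c & D \end{smallmatrix}\bigr]$ with $a$ scalar and $D$ of size $(n-1) \times (n-1)$, and set $\tilde{A} = \bigl[\begin{smallmatrix} a-1 & b \\ c & D \end{smallmatrix}\bigr]$. The identities $tA = e_1$ and $t_1 = 1$ give $(1, t_2,\ldots,t_n)\,\tilde{A} = 0$, so $\tilde{A}$ is not full; by Lemma~\ref{lem:cohn95.636} it is associated over $\field{K}$ to a linear hollow matrix. The crux of the proof, dual to the zero-last-row analysis in Lemma~\ref{lem:wp.for1}, is to show that the only possible hollow shape is an $n \times 1$ zero block positioned as the first column of $\tilde{A}$. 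If instead there were an $i \times (n-i+1)$ hollow block with $1 \le i \le n-1$, restricted admissible transformations (those that preserve $t_1 = 1$, namely $P$ with first column $e_1$ and admissible $Q$) would produce an $i \times (n-i)$ zero block inside columns $2,\ldots,n$ of $A$; placing it as a top-right or bottom-right block of $PAQ$ makes the complementary $(n-i) \times (n-i)$ diagonal block full, and the relation $\hat{t}(PAQ) = e_1$ then forces a subfamily of $\hat{t} = tP\inv$ to vanish, contradicting $\field{K}$-linear independence of the right family. A zero column at any position $j \ge 2$ would make $A$ itself inherit a zero column and fail to be full, so the zero column is forced to be the first. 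This case analysis is the main obstacle.

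The remainder is routine. From the hollow form $P_0 \tilde{A} Q_0$ with zero first column, the vector $q = Q_0 e_1 \in \field{K}^n$ satisfies $\tilde{A} q = 0$; fullness of $A$ rules out $q_1 = 0$ (else $Aq = 0$). After rescaling so that $q_1 = 1$, the admissible column transformation $Q = \bigl[\begin{smallmatrix} 1 & . \\ q' & I_{n-1} \end{smallmatrix}\bigr]$ with $q' = (q_2,\ldots,q_n)\trp$ turns the first column of $\tilde{A}Q$ to zero, so $AQ = \tilde{A}Q + e_1 e_1\trp$ has first column $e_1$, while $u$ and $v$ are untouched. Composing the normalization with $(I, Q)$ yields the required admissible transformation bringing $\als{A}$ to the form~\eqref{eqn:wp.inv2a}.
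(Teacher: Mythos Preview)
Your proposal is correct and follows precisely the approach the paper indicates: it is the exact dual of the proof of Lemma~\ref{lem:wp.for1}, with the left family replaced by the right family, the last row replaced by the first column, and the $(n,n)$-modification of $A$ replaced by the $(1,1)$-modification. Your final extraction of a right kernel vector $q\in\field{K}^n$ for $\tilde{A}$ together with the clean argument that $q_1\neq 0$ (via fullness of $A$) is in fact slightly more explicit than the corresponding step in Lemma~\ref{lem:wp.for1}, but it is the same idea carried out on the column side.
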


\begin{proof}
The proof is similar to the previous one switching the role
of left and right family.
\end{proof}

\begin{remark}
If $1 \in R(\als{A})$ for some \emph{minimal} ALS $\als{A} = (u,A,v)$,
say $\dim \als{A} = n$,
then, by Lemma~\ref{lem:wp.for2}, there is an
admissible transformation $(P,Q)$ such that the first column
in $PAQ$ is $[1,0,\ldots,0]\trp$.
So, if the first column of $A=(a_{ij})$ is not in this form, an
admissible transformation can be found in two steps:
Firstly, we can set up a linear system to determine an $(n-1)$-duple
of scalars $(\mu_2, \mu_3, \ldots, \mu_n)$ such that
$a_{i1} + \mu_2 a_{i2} + \mu_3 a_{i3} + \ldots + \mu_n a_{in}$
is in $\field{K}$ for $i=1,2,\ldots,n$.
Secondly, we use elementary row transformations (Gaussian elimination
in the first column) and ---if necessary--- permutations to get
the desired form of the first column.
Together, these transformations give some (admissible) transformation
$(P',Q')$.

An analogous procedure can be applied if $1\in L(\als{A})$.
And it can be combined for the case as in Lemma~\ref{lem:wp.inv3} (Inverse Type~$(1,1)$).
It works more generally for non-minimal systems, but can
fail in ``pathological'' cases. Compare with Example~\ref{ex:wp.path}.
\end{remark}

\begin{theorem}[Minimal Inverse]\label{thr:wp.mininv}
Let $0 \neq f \in \freeFLD{\field{K}}{X}$ be given by
the minimal system $\als{A} = (u, A, v)$
of dimension $n$.
Then a minimal admissible linear system for $f\inv$ is given by
\begin{displaymath}
\als{A}' = 
\begin{cases}
\text{\eqref{eqn:wp.inv3b} of $\dim \als{A}' = n-1$}
  & \text{if $1\in L(\als{A})$ and $1\in R(\als{A})$,}\\
\text{\eqref{eqn:wp.inv2b} of $\dim \als{A}' = n$}
  & \text{if $1\not\in L(\als{A})$ and $1\in R(\als{A})$,}\\
\text{\eqref{eqn:wp.inv1b} of $\dim \als{A}' = n$}
  & \text{if $1\in L(\als{A})$ and $1\not\in R(\als{A})$ and}\\
\text{\eqref{eqn:wp.inv0} of $\dim \als{A}' = n+1$}
  & \text{if $1\not\in L(\als{A})$ and $1\not\in R(\als{A})$}
\end{cases}
\end{displaymath}
provided that the necessary transformations according to
Lemma~\ref{lem:wp.for1} and~\ref{lem:wp.for2} are done before.
\end{theorem}

\begin{proof}
See
Lemma~\ref{lem:wp.inv3}, \ref{lem:wp.inv2}, \ref{lem:wp.inv1}
and~\ref{lem:wp.inv0}.
\end{proof}

There are two immediate consequences: The first
follows from Proposition~\ref{thr:wp.cohn99.21},
that is, the inverse type $(1,1)$ applies in particular
to polynomials. The second can be used to distinguish
between ``trivial'' units (in the ground field $\field{K}$)
and ``non-trivial'' units, that is, elements in
$\freeFLD{\field{K}}{X} \setminus \field{K}$.

\begin{corollary}
Let $p \in \freeALG{\field{K}}{X}$ with $\rank p = n \ge 2$.
Then $\rank(p\inv) = n-1$.
\end{corollary}

\begin{corollary}\label{cor:wp.rank1}
Let $0 \neq f \in \field{F}$.
Then $f \in \field{K}$ if and only if $\rank(f) = \rank(f\inv) = 1$.
\end{corollary}

\section*{Acknowledgement}

Special thanks go to Marek Bo\.zejko and Victor Vinnikov
for the hospitality and all the motivating discussions
in Wrocław and Beer-Sheva, respectively.
However, I am very grateful to Franz Lehner.
He gave me the chance to enter the world of non-commutative
mathematics. Without the freedom, support and advice
he offered this work would not have been possible.
Additonally I thank the anonymous referees for the
valuable comments.

\ifJOURNAL
\bibliographystyle{plain}
\else
\bibliographystyle{alpha}
\fi
\bibliography{doku}

\end{document}